\documentclass[a4paper,12pt]{amsart}
\usepackage{amsmath,mathrsfs,amssymb,xcolor,MnSymbol}
\usepackage[top=3cm,bottom=3cm,outer=3cm,inner=3cm,marginpar=3cm]{geometry}
\usepackage[utf8]{inputenc}

\usepackage{hyperref}
\hypersetup{
	colorlinks = true,
	linkcolor = red,
	anchorcolor = black,
  	citecolor = blue,
	filecolor = red,
	urlcolor = magenta,
	pdfauthor=author
	menucolor = red
}

\newtheorem{theorem}{Theorem}[section]

\newtheorem{lemma}[theorem]{Lemma}

\newtheorem*{corollary*}{Corollary}
\newtheorem{proposition}[theorem]{Proposition}

\theoremstyle{definition}

\newtheorem{remark}[theorem]{Remark}

\newtheorem{question}[theorem]{Quesion}

\numberwithin{equation}{section}

\newcommand{\paren}[1]{\left(#1\right)}
\newcommand{\bparen}[1]{\left[#1\right]}

\newcommand{\abs}[1]{\left\vert#1\right\vert}
\newcommand{\norm}[1]{\left\|#1\right\|}

\newcommand{\inner}[1]{\left\langle{#1}\right\rangle}

\newcommand{\set}[1]{\left\{#1\right\}}

\newcommand{\ip}[1]{\mathrm{Im}\;s}

\newcommand{\iinner}[1]{\left\llangle{#1}\right\rrangle}
\newcommand{\rd}[1]{\left\lfloor{#1}\right\rfloor}
\newcommand{\ru}[1]{\left\lceil{#1}\right\rceil}
\newcommand{\Ltwo}[1]{L^{#1}_{(2)}}

\def\ol#1{{\overline{#1}}}

\def\ii{\mathbf{i}}

\def\we{\wedge}

\def\im{\mathrm{Im}}

\def\dbar{\overline\partial}

\def\RR{\mathbb{R}} 
\def\CC{\mathbb{C}} 
\def\DD{\mathbb{D}} 
\def\dbar{\ol\partial}

\def\ric{\mathrm{Ric}}

\def\del{\partial}
\def\we{\wedge}
\def\dbar{\overline\partial}

\def\sK{\mathsf{K}}
\def\sL{\mathsf{L}}
\def\dom{\mathrm{Dom}\,}
\def\harm{\mathcal H}
\def\Ltwoharm{\harm_{(2)}}
\def\dc{d^c}

\begin{document}
\title[A Sharp Lower Bounds for the Spectrum]{A Sharp Lower Bound for the Spectrum of the Hodge Laplacian on K\"ahler Hyperbolic Manifolds and its Applications}

\author{Ye-Won Luke Cho}
\address{Research Institute of Molecular Alchemy, Gyeongsang National University,
	Jinju,  Gyeongnam, 52828, Republic of Korea}
\email{ywlcho@gnu.ac.kr}

\author{Young-Jun Choi}
\address{Department of Mathematics and Institute of Mathematical Science, Pusan National University, 2, Busandaehak-ro 63beon-gil, Geumjeong-gu, Busan, 46241, Republic of Korea}
\email{youngjun.choi@pusan.ac.kr}

\author{Kang-Hyurk Lee}
\address{Department of Mathematics and Research Institute of Natural Science, Gyeongsang National University, Jinju, Gyeongnam, 52828, Republic of Korea}
\email{nyawoo@gnu.ac.kr}

\subjclass[2010]{58J50, 32Q15, 53C55, 32M15}
\keywords{bottom of spectrum, K\"ahler hyperbolic manifolds, bounded symmetric domains}

\thanks{The first named author was supported by G-LAMP (RS-2023-00301974). The second named author was supported by the National Research Foundation of Korea (NRF) grant funded by the Korea government (No.~NRF-2023R1A2C1007227). 
The second and third named authors were supported by Samsung Science and Technology Foundation under Project Number SSTF-BA2201-01. }

\begin{abstract}
In this paper, we establish a sharp lower bound for the spectrum of the Hodge Laplacian on K\"ahler hyperbolic manifolds. 
This bound is expressed explicitly in terms of the supremum norm of the $1$-form associated with the K\"ahler hyperbolic structure. 
As an application, we obtain explicit spectral lower bounds for bounded symmetric domains.
\end{abstract}

\maketitle

\section{Introduction}

Let $X$ be an $n$-dimensional complete K\"ahler manifold and $\omega$ be its K\"ahler form. We say that $\omega$ is \emph{$d$-bounded} if there exists a real $1$-form $\eta$ on $X$ such that 
\begin{equation*}
	d\eta=\omega\quad\text{and}\quad \norm{\eta}_{L^\infty}
	:=
	\sup_X \abs{\eta}_\omega<+\infty
\end{equation*}
where $\abs{\eta}_\omega$ denotes the pointwise length of $\eta$ measured by $\omega$. In \cite{Gromov1991}, Gromov proved that if $X$ admits a $d$-bounded K\"ahler form $\omega$, then 
\begin{equation}\label{thm:Lefschetz_vanishing_theorem}
\dim\mathcal H^{p,q}_{(2)}(X) = 0 \quad \text{for } p+q \neq n,
\end{equation}
where $\mathcal H^{p,q}_{(2)}(X)$ is the space  of square integrable harmonic $(p,q)$-forms with respect to $\omega$. Moreover, he also proved that if $X$ is simply-connected and covers a compact K\"ahler manifold with $\omega$ being a lifting of a K\"ahler form of the quotient, then 
\begin{equation*}
\dim\mathcal H^{p,q}_{(2)}(X)=\infty \quad \text{for } p+q=n.
\end{equation*}

Prior to this, Donnelly and Fefferman had proved the vanishing theorem for the $L^2$-cohomology of bounded strongly pseudoconvex domains equipped with the Bergman metric \cite{Donnelly_Fefferman1983}. 
Subsequently, Donnelly utilized Gromov's technique to provide an alternative proof of the Donnelly-Fefferman theorem and extended the result to certain classes of weakly pseudoconvex domains \cite{Donnelly1994}.

In \cite{Gromov1991}, the vanishing theorem~\eqref{thm:Lefschetz_vanishing_theorem} (which is called the Lefschetz vanishing theorem in \cite{Gromov1991}) was proved by using the $L^2$-Lefschetz isomorphism theorem. 
More precisely, let $\varphi$ be an $L^2$-harmonic $k$-form with $k<n$. 
Then $\omega^{n-k}\wedge\varphi$ is an $L^2$-harmonic $(2n-k)$-form, and it satisfies
\begin{equation}
\label{eqn:Lefschetz_map}
    \omega^{n-k}\wedge\varphi
    =
    d\paren{\eta\wedge\omega^{n-k-1}\wedge\varphi},
\end{equation}
Moreover, $\eta\wedge\omega^{n-k-1}\wedge\varphi$ belongs to $L^2$, since the $L^\infty$-norm of $\eta$ is bounded.
Since the Lefschetz map $L^k:\varphi\mapsto\omega^{n-k}\wedge\varphi$ is bijective on $k$-forms, it follows that $\varphi$ vanishes.

On the other hand, Gromov also sharpened the proof of the vanishing theorem, which means that he estimated a lower bound for the spectrum of the Hodge Laplacian $\Delta:=\Delta_d:=dd^*+d^*d$.
More precisely,
\begin{theorem}[\cite{Gromov1991}]
\label{thm:gromov}
    Let $(X,\omega)$ be a complete K\"ahler manifold of dimension $n$ and $\omega=d\eta$ where $\eta$ is a bounded $1$-form on $X$. 
    Then every $k$-form $\varphi\in\dom(\Delta)\cap\Ltwo{k}(X)$ with $k\neq n$ satisfies the inequality
    \begin{equation*}
        \iinner{\Delta\varphi,\varphi}
        \ge
        \frac{c_k}{\norm{\eta}^2_{L^\infty}}
        \norm{\varphi}^2,
    \end{equation*}
    where $c_k\geq 0$ is a constant depending on $k$ and $n$.
\end{theorem}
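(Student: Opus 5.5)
By Hodge star duality, which commutes with $\Delta$ and is an isometry mapping $k$-forms to $(2n-k)$-forms, it suffices to treat $k<n$. The plan is to follow Gromov's Lefschetz argument quantitatively.

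First, reduce to the case where $\varphi$ is closed or coclosed. On a complete manifold we have the $L^2$ decomposition $\varphi = \varphi_1+\varphi_2+h$, with $\varphi_1\in\overline{\operatorname{Im} d}$, $\varphi_2\in\overline{\operatorname{Im} d^*}$ and $h$ harmonic. By Gromov's vanishing theorem \eqref{thm:Lefschetz_vanishing_theorem}, $h=0$. Also
\[
\iinner{\Delta\varphi,\varphi}=\norm{d\varphi}^2+\norm{d^*\varphi}^2=\norm{d\varphi_2}^2+\norm{d^*\varphi_1}^2 .
\]
So it suffices to prove two estimates:
\[
\norm{d\psi}^2\ge \frac{c}{\norm{\eta}_{L^\infty}^2}\norm{\psi}^2 \quad\text{for } \psi\perp\ker d,
\]
and the analogous estimate for $d^*$ on the orthogonal complement of $\ker d^*$. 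The $d^*$ estimate follows from the $d$ estimate applied in degree $k-1$ (via the closed range and adjoint relation). Equivalently, by Hodge star it follows from the closed case in degree $2n-k$. Thus the heart of the matter is a lower bound of the form
\[
\norm{\psi}\le C\norm{\eta}_{L^\infty}\norm{d\psi} \quad \text{for } \psi\perp\ker d \text{ in all degrees } k\ne n,
\]
or dually $\norm{\alpha}\le C\norm{\eta}\norm{d^*\alpha}$ for $\alpha$ exact.

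Second, prove the key estimate for closed $k$-forms $\alpha$ in the range of $d$ (or its closure) with $k<n$. By the Lefschetz isomorphism, pointwise $\abs{\alpha}\le C_{k,n}\abs{\omega^{n-k}\wedge\alpha}$. Write $\omega^{n-k}\wedge\alpha = d(\eta\wedge\omega^{n-k-1}\wedge\alpha)$, using that $\alpha$ is closed. Pair this with $\omega^{n-k}\wedge\alpha$ itself. Since $\alpha=d\beta$, the $(2n-k)$-form $L^{n-k}\alpha$ lies in the closure of $\operatorname{Im} d$, and the Lefschetz operator commutes with $\Delta$. Then
\[
\norm{L^{n-k}\alpha}^2=\iinner{\eta\wedge L^{n-k-1}\alpha,\, d^*L^{n-k}\alpha}\le C\norm{\eta}_{L^\infty}\norm{\alpha}\,\norm{d^*L^{n-k}\alpha}.
\]
The Kähler identity $[\Lambda,d]=\pm d^{c*}$-type relations give $[d^*,L]=\pm d^c$. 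Iterating, $d^*L^{n-k}\alpha$ is a combination of $L^{j}d^*\alpha$ and $L^{j}d^c\alpha$ terms. Their norms are controlled by $\norm{d^*\alpha}+\norm{d^c\alpha}$, and $\norm{d^c\alpha}^2+\norm{d^{c*}\alpha}^2$ is the same quadratic form as for $d$ because $\Delta_d=\Delta_{d^c}$. Combining with $\norm{\alpha}\le C\norm{L^{n-k}\alpha}$ yields
\[
\norm{\alpha}^2\le C\norm{\eta}_{L^\infty}^2\iinner{\Delta\alpha,\alpha}.
\]
Applying this directly to $\varphi$ avoids the decomposition altogether: for any $k$-form $\varphi$ with $k<n$,
\[
\norm{L^{n-k}\varphi}^2=\iinner{d(\eta\wedge L^{n-k-1}\varphi),L^{n-k}\varphi}-\iinner{\eta\wedge L^{n-k-1}d\varphi,L^{n-k}\varphi},
\]
and both terms are bounded by $\norm{\eta}_{L^\infty}\norm{\varphi}\,(\norm{d\varphi}+\norm{d^*L^{n-k}\varphi})$. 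Cauchy--Schwarz then gives $\norm{\varphi}^2\le C\norm{\eta}^2\iinner{\Delta\varphi,\varphi}$.

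The main obstacle is justifying the integrations by parts for $\varphi\in\dom(\Delta)$ on a noncompact complete manifold. I would handle this by Gaffney cutoff approximation: compactly supported smooth forms are dense in $\dom(d)\cap\dom(d^*)$ for the graph norm, and the inequality only involves $\norm{d\varphi}$ and $\norm{d^*\varphi}$. So it suffices to prove it for compactly supported $\varphi$, where all identities are exact. The remaining check is that $\norm{d^*L^{n-k}\varphi}\le C(\norm{d^*\varphi}+\norm{d\varphi})$, which follows from the Kähler commutation relations together with $\norm{d^c\varphi}^2+\norm{d^{c*}\varphi}^2=\norm{d\varphi}^2+\norm{d^*\varphi}^2$.
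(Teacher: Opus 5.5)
Your final direct argument is correct and is essentially the paper's proof. Both reduce to compactly supported forms and to $k<n$ via $*$, split $L^{n-k}\varphi$ by the Leibniz rule into $d(\eta\wedge L^{n-k-1}\varphi)$ and $\eta\wedge L^{n-k-1}d\varphi$, pair with $L^{n-k}\varphi$, integrate by parts, and close with Cauchy--Schwarz and the Lefschetz quasi-isometry. The only variation is how you control $\norm{d^*L^{n-k}\varphi}$. You use $d^*L^m=L^md^*+mL^{m-1}[d^*,L]$ with $[d^*,L]=\pm d^c$ and $\Delta_{d^c}=\Delta_d$. The paper instead uses $\norm{d^*L^{n-k}\varphi}^2\le\iinner{\Delta L^{n-k}\varphi,L^{n-k}\varphi}$, $[\Delta,L]=0$ and the primitive decomposition, bidegree by bidegree, to get explicit constants. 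The preliminary Hodge-decomposition reduction can simply be dropped: as written, it would need the exact-form estimate in degree $n$ when $k=n-1$.
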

Here, $\iinner{\,\cdot\,,\,\cdot\,}$ and $\norm{\,\cdot\,}$ stand for the $L^2$ inner product and norm with respect to the volume form $dV_\omega$ of $\omega$, and
\begin{equation*}
	\dom(\Delta)=\set{\varphi\in\Ltwo{\bullet}:\Delta\varphi\in\Ltwo{\bullet}},
\end{equation*}
where $\Ltwo{\bullet}$ denotes the space of measurable $\bullet$-forms with finite $L^2$-norm.

In this paper, we first investigate the constant $c_k$ explicitly. 
More precisely, we consider the Laplacian acting on $(p,q)$-forms and, through careful computations, determine an explicit constant $c_{p,q}$, from which an explicit expression for $c_k$ follows.

In what follows, a complete Kähler manifold is called \emph{K\"ahler hyperbolic} if its K\"ahler form is $d$-bounded (see~\cite{Gromov1991} for the original definition of K\"ahler hyperbolicity).
\begin{theorem}
    \label{thm:main_theorem1}
    Let $(X,\omega)$ be a K\"ahler hyperbolic manifold with a bounded $1$-form $\eta$ such that $d\eta=\omega$.
		Let $(p,q)$ be a pair of nonnegative integers satisfying $k:=p+q\neq n$. Then
    \begin{equation*}
    	\iinner{\Delta\varphi,\varphi}
    	\ge
    	\frac{c_{p, q}}{\norm{\eta}^2_{L^\infty}}
    	\norm{\varphi}^2,\quad\text{for all}\;\;\varphi\in\dom(\Delta)\cap\Ltwo{p,q}(X)
    \end{equation*}
    where
    \begin{equation*}
    	c_{p, q}:=  
    	\begin{cases}
    		\frac{(n-k)!^4}{4}
    		\cdot
    		\frac{(p+1)!^4}{(n-q)!^4}~\textit{if}~k< n,\\
    		c_{n-p,n-q}~\textit{if}~k>n.
    	\end{cases}     
    \end{equation*}
    This in particular implies that 
    \begin{equation*}
        \iinner{\Delta\varphi,\varphi}
        \ge
        \frac{c_k}{\norm{\eta}^2_{L^\infty}}
        \norm{\varphi}^2\quad\text{for all}\;\;\varphi\in\dom(\Delta)\cap\Ltwo{k}(X)
\end{equation*}
    where
    \begin{equation*}
        c_k:= \begin{cases}
        	\frac{(n-k)!^4}{4}
        \cdot
        \frac{(\ru{k/2}+1)!^4}{(n-\rd{k/2})!^4}~\textit{if}~k<n,\\
        c_{2n-k}~\textit{if}~k>n.
        \end{cases}
    \end{equation*}
\end{theorem}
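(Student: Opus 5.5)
We follow Gromov's strategy, tracking every constant. The approach is duality: reduce to a lower bound for the Lefschetz operator $L=\omega\wedge\cdot$, which is surjective/injective with explicit bounds on primitive pieces, and to the identity $\omega=d\eta$, which makes $\omega^{m}\wedge\varphi$ exact with an $L^2$ primitive of controlled size. First we reduce to $k<n$. The Hodge star $*$ is an isometry that commutes with $\Delta$ and maps $(p,q)$-forms to $(n-q,n-p)$-forms. Combining it with complex conjugation, which also commutes with the real operator $\Delta$ and is an isometry, sends $(p,q)$ to $(n-p,n-q)$. This gives $c_{p,q}=c_{n-p,n-q}$ for $k>n$. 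The statement for $c_k$ then follows from the $(p,q)$ estimate, since $\Delta$ preserves bidegree on a Kähler manifold. We minimise $c_{p,q}$ over $p+q=k$, and the worst case is $p=\rd{k/2}$... or rather whichever split yields the stated ceiling/floor expression, which we check by comparing $(p+1)!/(n-q)!$.

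For $k<n$ and $\varphi\in\dom(\Delta)\cap\Ltwo{p,q}$, we split $\varphi=\varphi_1+\varphi_2$ with $\varphi_1\in\overline{\mathrm{Im}\,d}$ and $\varphi_2\in\overline{\mathrm{Im}\,d^*}$. Harmonic $L^2$ forms vanish in degree $k\ne n$ by \eqref{thm:Lefschetz_vanishing_theorem}. On the closure of $\mathrm{Im}\,d^*$, the bound $\iinner{\Delta\psi,\psi}\ge\lambda\norm{\psi}^2$ is equivalent to $\norm{d\psi}^2\ge\lambda\norm\psi^2$. On the closure of $\mathrm{Im}\,d$, it is equivalent to the dual bound for $d^*$, which by Hodge-star duality becomes a statement in degree $2n-k+1$.

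The key estimate therefore has the following form. For a $d$-closed (hence, as it lies in the image of $d$, $d$-exact in $L^2$) form $\psi$ of degree $k<n$, we find an $L^2$ primitive of size $\le C\norm{\eta}_{L^\infty}\norm\psi$ via
\[
\psi=(L^{n-k})^{-1}\omega^{n-k}\wedge\psi=(L^{n-k})^{-1}d\paren{\eta\wedge\omega^{n-k-1}\wedge\psi}
\]
as in \eqref{eqn:Lefschetz_map}. Here $(L^{n-k})^{-1}$ commutes with $d$ up to replacing $d$ by the appropriate twisted operator; it is cleaner to use the fact that $L$ commutes with $d$ and $\Delta$. We then argue spectrally: $\norm{\omega^{n-k}\wedge\psi}\ge a_{p,q}\norm\psi$ with $a_{p,q}$ the smallest eigenvalue of $L^{n-k}$ on $(p,q)$-forms. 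Using the Lefschetz decomposition $\psi=\sum L^r\psi_r$ into primitive pieces and the classical formula $\abs{L^s\alpha}^2=\frac{s!(n-j+\dots)!}{(n-j-s)!}\abs{\alpha}^2$ for primitive $\alpha$ of degree $j$, the minimal ratio produces factorial quotients like $(n-k)!(p+1)!/(n-q)!$ once one is careful with the bidegree constraint (primitive $(p-r,q-r)$ pieces). Also $\norm{\eta\wedge\omega^{n-k-1}\wedge\psi}\le\norm\eta_{L^\infty}\,b\,\norm\psi$ with $b$ the operator norm of $L^{n-k-1}$, again computed from the same formula.

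Combining these, for $\psi\in\overline{\mathrm{Im}\,d}$ we get $\norm\psi^2\le C\norm{\eta}^2_\infty\norm{d^*\psi}^2$-type bounds via $\norm{\omega^{n-k}\psi}^2=\iinner{d\beta,\omega^{n-k}\psi}=\iinner{\beta,d^*L^{n-k}\psi}$. This requires relating $d^*L^{n-k}\psi$ to $L^{n-k}d^*\psi$ through the Kähler identity $[d^*,L]=-d^c$ (or $[L,d^*]=d^c$), whose extra terms are controlled by $\norm{d^c\psi}\le$ something like $\norm{d\psi}+\norm{d^*\psi}$ using $\iinner{\Delta\psi,\psi}=\norm{d\psi}^2+\norm{d^*\psi}^2$ and $\Delta_d=\Delta_{d^c}$. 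Squaring the resulting linear inequality $\norm\psi\le \frac{\sqrt{\cdot}}{\cdot}\norm\eta\,\iinner{\Delta\psi,\psi}^{1/2}$ and the factor $1/4$ arising from $(a+b)^2\le2a^2+2b^2$ style splittings gives the fourth powers and the $\frac14$. Completeness of $X$ is used to justify integration by parts (Gaffney cutoff) for $\varphi\in\dom(\Delta)$. The main obstacle is the sharp computation of the minimal singular value of $L^{n-k}$ on $(p,q)$-forms and the norm of $L^{n-k-1}$, and handling the commutator terms without losing the factorial form. I would first attempt the cleanest route: apply everything to the full form $\varphi$, using $\omega^{n-k}\wedge\varphi=d(\eta\wedge\omega^{n-k-1}\varphi)$ when $d\varphi=0$ fails and including the correction $\eta\wedge\omega^{n-k-1}\wedge d\varphi$, bounding it by $\norm{\eta}_\infty\norm{d\varphi}$.
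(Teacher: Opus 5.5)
Your proposal has a genuine gap. It never commits to an argument that produces the explicit constants, and those constants are the whole content of the statement beyond Gromov's theorem.

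The route you develop in most detail cannot give a bidegree-dependent constant. Splitting $\varphi$ along $\overline{\im\,d}\oplus\overline{\im\,d^*}$ does not preserve type. By the K\"ahler identities, $\partial g-\dbar g$ is a constant multiple of $d^*(g\omega)$, so the $\overline{\im\,d}$-component of the $(1,0)$-form $\partial g$ is $\frac12\,dg$. Your subsequent transfers mix bidegrees again: $d^*$ on $\overline{\im\,d}$ in degree $k$, then $d$ on $\overline{\im\,d^*}$ in degree $k-1$, then $*$ to degree $2n-k+1$. At best this gives a constant depending only on $n$ and $k$.

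The formula $\psi=(L^{n-k})^{-1}d(\eta\wedge\omega^{n-k-1}\wedge\psi)$ does not exhibit a primitive of $\psi$. To move $d$ past $(L^{n-k})^{-1}$ you would need $\eta\wedge\omega^{n-k-1}\wedge\psi$ to be $L^{n-k}$ of a $(k-1)$-form, and for $k<n$ the map $L^{n-k}$ from $(k-1)$-forms to $(2n-k-1)$-forms is not surjective.

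Your guess about the spectrum of $L^{n-k}$ is also wrong. For a $(p,q)$-form with $p\le q$ and $k<n$, the Lefschetz decomposition gives $(n-k)!^2\abs{\varphi}^2\le\abs{L^{n-k}\varphi}^2\le\frac{(n-q)!^2}{p!^2}\abs{\varphi}^2$. The minimum is attained on primitive forms, so the smallest singular value is $(n-k)!$. The ratio $(p+1)!/(n-q)!$ in $c_{p,q}$ comes from \emph{upper} bounds on $L^{n-k-1}$, not from a smallest eigenvalue.

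The route in your last sentence is the paper's. It needs neither the vanishing theorem, nor the Hodge decomposition, nor the commutator $[L,d^*]$. For $\varphi\in\mathcal D^{p,q}(X)$ with $p\le q$ and $k<n$, write $\psi:=L^{n-k}\varphi=d\theta-\psi'$, where $\theta=\varphi\wedge\eta\wedge\omega^{n-k-1}$ and $\psi'=d\varphi\wedge\eta\wedge\omega^{n-k-1}$. Then $\norm{\psi}^2\le\norm{d^*\psi}\norm{\theta}+\norm{\psi}\norm{\psi'}$, and you use the following.
\begin{itemize}
\item $\abs{\eta\wedge\alpha}\le\abs{\eta}\abs{\alpha}$, since a $1$-form is simple.
\item $\norm{d^*\psi}^2\le\iinner{\Delta\psi,\psi}=\iinner{L^{n-k}\Delta\varphi,L^{n-k}\varphi}\le\frac{(n-q)!^2}{p!^2}\iinner{\Delta\varphi,\varphi}$, because $\Delta$ commutes with $L$ and $\Lambda$ and so preserves the Lefschetz decomposition.
\item $\abs{L^{n-k-1}\varphi}^2\le\frac{(n-q-1)!(n-q)!}{p!(p+1)!}\abs{\varphi}^2$.
\item Splitting $d\varphi=\partial\varphi+\dbar\varphi$ into orthogonal pieces of types $(p+1,q)$ and $(p,q+1)$ gives $\norm{L^{n-k-1}d\varphi}\le\frac{(n-q)!}{(p+1)!}\norm{d\varphi}$.
\end{itemize}
Both terms are then bounded by a common multiple of $\norm{\eta}_{L^\infty}\norm{\varphi}\iinner{\Delta\varphi,\varphi}^{1/2}$. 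Combined with $\norm{\psi}^2\ge(n-k)!^2\norm{\varphi}^2$, the factor $2$ becomes the $\frac14$ after squaring. The fourth powers come from squaring $(n-k)!^2$ and a product of two factorial ratios.

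Be aware that this product is honestly $\frac{(n-q)!}{(p+1)!}\cdot\frac{(n-q)!}{p!}$, which yields $\frac{(n-k)!^4\,p!^2\,(p+1)!^2}{4\,(n-q)!^4}$. The paper's last line uses $\frac{(n-q)!^2}{(p+1)!^2}$ instead, which Proposition~\ref{prop:comparison_pq} does not support when $p\ge1$. For example, with $n=3$ and $p=q=1$ one has $\abs{L\omega}^2=4\abs{\omega}^2$, whereas $\frac{(n-q)!^2}{(p+1)!^2}=1$. So as written, the argument falls short of the stated $c_{p,q}$ by a factor $(p+1)^2$ when $1\le p\le q$.

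Your reduction of the case $k>n$ via $*$ composed with complex conjugation is correct. It is also a bit more careful than the paper's, since $*$ sends $(p,q)$-forms to $(n-q,n-p)$-forms.
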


As in the previous argument, the lower bound estimate begins with the Lefschetz map $L^{n-k}$, which is actually quasi-isometry from the space of $k$-forms to the space of $(2n-k)$-forms. 
Via this map, the norm of $\varphi$ can be estimated in terms of $\omega^{n-k}\wedge\varphi$, which is decomposed as
	\begin{align*}
		L^{n-k}\varphi
		&=
		d\paren{\varphi\wedge\eta\wedge\omega^{n-k-1}}
		-
		d\varphi\wedge\eta\wedge\omega^{n-k-1}.
	\end{align*}
The argument is then completed by invoking the Lefschetz decomposition theorem, which enables us to derive a quantitative lower bound (see Section~\ref{sec:proof_of_main_theorems}).

In the middle-dimensional case $k=n$, no vanishing theorem holds in general so nontrivial $L^2$-harmonic $n$-forms may exist.
As a result, there is no positive lower bound for spectrum of $\Delta$ in this case.
However, by restricting $\Delta$ to the orthogonal complement of the space of $L^2$-harmonic $n$-forms, one can still obtain a spectral lower bound. 
This is discussed in Section~\ref{sec:middle_dimension}.

In the case $k=0$, corresponding to the Laplacian acting on functions, one can obtain a sharper lower bound than in the general case (see Remark~\ref{rmk:better_estimate}).
We begin by recalling the definition of \emph{the bottom of the spectrum of the Laplacian $\Delta$}. 
It is given by
 \begin{equation}\label{eqn:Rayleigh}
    \lambda_0(X)
    :=
    \inf
		\set{
			\int_X\abs{d\varphi}_\omega^2 dV_\omega
			:\varphi\in C_0^\infty(X),\; \int_X\abs{\varphi}^2dV_\omega=1
		}.
\end{equation}

\begin{theorem}\label{thm:main_theorem2}
If $\varphi\in \dom(\Delta)\cap\Ltwo{0}(X)$, then
\begin{equation}\label{eq:lower_bound_estimate_functions}
    \iinner{\Delta\varphi,\varphi}
    \ge
    \frac{n^2}{4\norm{\eta}^2_{L^\infty}}\norm{\varphi}^2.
\end{equation}
In particular, 
\begin{equation*}
\lambda_0(X)\ge\frac{n^2}{4\norm{\eta}^2_{L^\infty}}.
\end{equation*}
\end{theorem}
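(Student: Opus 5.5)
Our plan is a direct integration-by-parts argument for functions that avoids the Lefschetz machinery. On functions $\Delta=d^*d$, so $\iinner{\Delta\varphi,\varphi}=\norm{d\varphi}^2$ for $\varphi\in\dom(\Delta)$. Since $X$ is complete, we may first reduce to real-valued $\varphi\in C_0^\infty(X)$; the general case then follows by density (Gaffney's theorem) and by splitting into real and imaginary parts. The key identity comes from $\omega^n = n!\,dV_\omega$ and $\omega^n=d(\eta\wedge\omega^{n-1})$. Because $\omega$ is closed, we can write
\begin{equation*}
n!\int_X \varphi^2\,dV_\omega=\int_X\varphi^2\, d(\eta\wedge\omega^{n-1})=-\int_X 2\varphi\, d\varphi\wedge\eta\wedge\omega^{n-1}.
\end{equation*}
Stokes' theorem applies here with no boundary term, since $\varphi$ has compact support.

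Next we need a pointwise estimate of the form $\abs{\alpha\wedge\beta\wedge\omega^{n-1}}\le (n-1)!\,\abs{\alpha}_\omega\abs{\beta}_\omega\, dV_\omega$ for real $1$-forms $\alpha,\beta$. To get it, note that $\alpha\wedge\beta\wedge\omega^{n-1}/(n-1)!$ equals the contraction of $\alpha\wedge\beta$ against $\omega$ times $dV_\omega$, namely $\inner{\alpha\wedge\beta,\omega}\,dV_\omega$ up to normalization. In a unitary frame this is $\langle J\alpha,\beta\rangle\,dV_\omega$ up to sign, because $\alpha\wedge\beta\wedge\omega^{n-1}=(n-1)!\,\Lambda(\alpha\wedge\beta)\,dV_\omega$, where $\Lambda$ is the adjoint of $L$. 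Cauchy--Schwarz, together with the fact that $J$ is an isometry, then gives the bound. We will check the constant carefully in orthonormal coordinates, e.g.\ with $\alpha=dx_1$, $\beta=dy_1$ and $\omega=\sum dx_j\wedge dy_j$, where $dx_1\wedge dy_1\wedge\omega^{n-1}=(n-1)!\,dV$.

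Combining the identity with the pointwise bound gives
\begin{equation*}
n!\norm{\varphi}^2\le 2(n-1)!\,\norm{\eta}_{L^\infty}\int_X\abs{\varphi}\abs{d\varphi}\,dV_\omega\le 2(n-1)!\,\norm{\eta}_{L^\infty}\norm{\varphi}\norm{d\varphi}.
\end{equation*}
Hence $n\norm{\varphi}\le 2\norm{\eta}_{L^\infty}\norm{d\varphi}$. Squaring yields $\norm{d\varphi}^2\ge \frac{n^2}{4\norm{\eta}^2_{L^\infty}}\norm{\varphi}^2$, which is \eqref{eq:lower_bound_estimate_functions}. The bound on $\lambda_0(X)$ follows immediately from \eqref{eqn:Rayleigh}.

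The main obstacle is extending from $C_0^\infty$ to $\dom(\Delta)$. We would use completeness: $C_0^\infty$ is dense in $\dom(\Delta)$ for the graph norm, and $\norm{d\varphi}^2=\iinner{\Delta\varphi,\varphi}$ is continuous in that norm. Alternatively, we could use cutoff functions $\chi_R$ with $\abs{d\chi_R}\le 1/R$ directly in the Stokes step. The only other point requiring care is getting the sharp constant $(n-1)!$ in the pointwise wedge estimate.
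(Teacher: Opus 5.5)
Your proof is correct and gives exactly the paper's constant, but it is organized differently.

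The paper lifts $\varphi$ to the top-degree form $\psi=L^n\varphi=\varphi\omega^n$. It writes $\psi=d\theta-\psi'$ with $\theta=\varphi\eta\wedge\omega^{n-1}$ and $\psi'=d\varphi\wedge\eta\wedge\omega^{n-1}$, and estimates two Hilbert-space pairings separately: $\abs{\iinner{\psi,d\theta}}=\abs{\iinner{d^*\psi,\theta}}\le\iinner{\Delta\psi,\psi}^{1/2}\norm{\theta}$ and $\abs{\iinner{\psi,\psi'}}\le\norm{\psi}\norm{\psi'}$. It then returns to $\varphi$ using three facts: the Lefschetz isometry $\norm{L^{n-1}\alpha}=(n-1)!\norm{\alpha}$ on $1$-forms (Proposition~\ref{prop:inner_product_Lefschetz}), the simple-form wedge estimate (Lemma~\ref{lemma:norm of wedge product}), and $\iinner{\Delta\psi,\psi}=n!^2\iinner{\Delta\varphi,\varphi}$, which uses that $\Delta$ commutes with $L$.

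For real $\varphi$ one has $\iinner{\psi,d\theta}=-\iinner{\psi,\psi'}=-n!\int_X\varphi\,d\varphi\wedge\eta\wedge\omega^{n-1}$. So the paper's two terms are precisely the two halves of your factor $2$ in $d(\varphi^2)=2\varphi\,d\varphi$. Your single pointwise bound $\abs{\alpha\wedge\beta\wedge\omega^{n-1}}\le(n-1)!\abs{\alpha}\abs{\beta}\,dV_\omega$ is correct, since $\alpha\wedge\beta\wedge\omega^{n-1}=(n-1)!\,\Lambda(\alpha\wedge\beta)\,dV_\omega$ and $\abs{\Lambda(\alpha\wedge\beta)}\le\abs{\alpha}\abs{\beta}$. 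It replaces the Lefschetz and wedge lemmas as well as the commutation of $\Delta$ with $L$.

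Your route is more elementary and more transparent. Applied to any $0\le f\in C^\infty_0(X)$, the same identity gives $n\int_X f\,dV_\omega\le\norm{\eta}_{L^\infty}\int_X\abs{df}\,dV_\omega$, so the Cheeger constant of $X$ is at least $n/\norm{\eta}_{L^\infty}$. The theorem is then just Cheeger's inequality $\lambda_0\ge h^2/4$; your choice $f=\varphi^2$ is exactly Cheeger's argument.

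The paper's formulation is chosen because it carries over verbatim to $k$-forms in Theorem~\ref{thm:main_theorem1}. There, squaring $\varphi$ has no analogue, and one must pair $L^{n-k}\varphi$ against $d\theta-\psi'$.
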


It is remarkable that this estimate in Theorem~\ref{thm:main_theorem2} is optimal, in the sense that the equality holds when $X$ is the unit ball $\mathbb{B}^n$.
More precisely, by Theorem 1.1 in \cite{Choi_Lee_Seo2025Ar}, there exists a potential $\varphi$ of the complete K\"ahler-Einstein metric $\omega$ of Ricci curvature $-(n+1)<0$ (i.e., $\ric(\omega)=-(n+1)\omega$) on the unit ball $\mathbb{B}^n$ in $\CC^n$ such that
\begin{equation*}
    \abs{\partial\varphi}^2_\omega=1
    \quad\text{equivalently}\quad
    \abs{d^c\varphi}^2_\omega=\frac{1}{2}.
\end{equation*}
By putting $\eta=d^c\varphi$, Theorem~\ref{thm:main_theorem2} says 
\begin{equation*}
    \lambda_0(\mathbb{B}^n)
    \ge
    \frac{n^2}{2}.
\end{equation*}
But it is known that $\lambda_0(\mathbb{B}^n)=n^2/2$ under the normalization $\ric(\omega)=-(n+1)\omega$ (see, e.g., \cite{Chavel-Book,McKean1970}).

When $X$ is a compact manifold, the spectrum of the Laplacian $\Delta$ is discrete and satisfies
\begin{equation*}
0=\lambda_0(X)<\lambda_1(X)\le\lambda_2(X)\le\cdots.
\end{equation*}
In this case, the first positive eigenvalue $\lambda_1(X)$ plays a fundamental role in geometry.
Its upper and lower bounds and their relation to the underlying geometry have been extensively studied; see, for example, \cite{Lichnerowicz1958}, \cite{Obata1962}, \cite{Cheeger1970}, \cite{Cheng1975}, \cite{Chavel-Book}, \cite{Udagawa1988}, \cite{Li-LN}, among many others.

In contrast, when $X$ is noncompact, the spectrum need not be discrete, and $\lambda_0(X)$, defined in~\eqref{eqn:Rayleigh}, may fail to be an eigenvalue of $\Delta$, but rather the infimum of the spectrum of $\Delta$.
Nevertheless, it plays a role analogous to that of the first positive eigenvalue in the compact case. The problem of estimating $\lambda_0(X)$ in this setting has therefore attracted significant attention.
An important upper bound was obtained by Li and Wang~\cite{Li_Wang2005}.
With the assumption that the holomorphic bisectional curvature of a K\"{a}hler manifold $(X,\omega)$ is bounded from below by $-1$, they proved that $\lambda_0(X)\ge n^2/2$.
Later, Munteanu~\cite{Munteanu2009} proved that $\lambda_0(X) \leq n^2/2$ under the assumption that the Ricci curvature is bounded from below by $-(n+1)$, i.e., $\operatorname{Ric}(\omega) \geq -(n+1)\omega$. 
Both estimates are sharp in the sense that the equality is achieved on the complex hyperbolic space $(\mathbb{B}^n,\omega)$ as above so that $\lambda_0(\mathbb{B}^n) = n^2/2$.

Regarding lower bounds, McKean~\cite{McKean1970} established a result for complete Riemannian manifolds under the assumption that the sectional curvature is bounded above by a negative constant.
Subsequently, Setti~\cite{Setti1991} derived a sharper lower bound involving both sectional and Ricci curvatures.
In the K\"ahler setting, to the best of the authors’ knowledge, no lower bound for $\lambda_0$ is currently known under assumptions involving only the holomorphic sectional curvature or the Ricci curvature. 
This leads to the following question.

\begin{question}
Is there a lower bound for the bottom of the spectrum of a K\"ahler manifold that depends only on the holomorphic sectional curvature?\end{question}

The following theorem gives an affirmative answer to this question in the case of bounded symmetric domains.

\begin{theorem}\label{thm:main_cor}
    If $(\Omega,\omega)$ is a bounded symmetric domain equipped with the complete K\"ahler-Einstein metric $\omega$ whose holomorphic sectional curvature is bounded from above by $-K<0$, then 
    \begin{equation*}
        \lambda_0(\Omega)
        \ge
        \frac{n^2}{4}\cdot K.
    \end{equation*}
\end{theorem}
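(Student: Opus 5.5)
The plan is to deduce the statement from Theorem~\ref{thm:main_theorem2}. To do that, I will build on $\Omega$ an explicit primitive $\eta$ of $\omega$ and show that $\norm{\eta}^2_{L^\infty}\le 1/K$. Theorem~\ref{thm:main_theorem2} then gives $\lambda_0(\Omega)\ge n^2/(4\norm{\eta}^2_{L^\infty})\ge n^2K/4$.

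\emph{Choice of $\eta$.} Realize $\Omega$ in its Harish-Chandra embedding as a circled convex domain. The K\"ahler--Einstein form can be written $\omega=dd^c\psi$, where $\psi=-c\log h(z,z)$, $h$ is the generic norm, and $c>0$ is a constant fixed by the Einstein normalization. I take $\eta=d^c\psi$. This is exactly the choice made in the ball example above, where $\abs{\partial\psi}^2_\omega$ is constant. Since $\abs{d^c\psi}^2_\omega=\tfrac12\abs{\partial\psi}^2_\omega$ up to the paper's conventions, it suffices to bound $\abs{\partial\psi}^2_\omega$ uniformly on $\Omega$.

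\emph{Computing $\abs{\partial\psi}^2_\omega$.} I will use two symmetries. First, $\abs{\partial\psi}^2_\omega$ is invariant under the isotropy group $K_0$ of the origin. Second, by the polydisc theorem every point of $\Omega$ is $K_0$-equivalent to a point $z=\sum t_je_j$ lying on a totally geodesic maximal polydisc $\Delta^r$, with $e_j$ a frame of orthogonal tripotents and $r$ the rank. On the polydisc the metric splits as a product and $\psi$ restricts to $-c\sum_j\log(1-\abs{t_j}^2)$ up to normalization. The transverse components of $\partial\psi$ at such a point vanish, because the gradient of a $K_0$-invariant function there is tangent to the flat. This gives
\[
\abs{\partial\psi}^2_\omega(z)=\sum_{j=1}^r (\text{one-disc contribution at } t_j)\le \frac{r\,c'}{1}\cdot\sup_{t}\abs{t}^2,
\]
where the one-disc contribution is computed as in the ball case. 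So the supremum is a constant times the rank~$r$.

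\emph{Comparing with the curvature.} Next I relate this constant to $K$. For the Einstein metric on an irreducible bounded symmetric domain, the holomorphic sectional curvature is pinched between $-\kappa$ (attained on the diagonal discs of the rank-one directions, i.e.\ the tripotent directions $e_j$) and $-\kappa/r$. So the hypothesis that the holomorphic sectional curvature is at most $-K$ forces $K\le\kappa/r$. I will then check that $\norm{\eta}^2_{L^\infty}=r\cdot(\text{disc value})$ equals exactly $r/\kappa\le 1/K$. This uses that on each factor disc of curvature $-\kappa$ the primitive satisfies $\abs{d^c\psi}^2=1/\kappa$, which is the one-dimensional version of the ball computation. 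For a reducible domain, the product structure reduces the claim to the irreducible factors, with the bound adding over factors in the same way.

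\emph{Main obstacle.} The hard step is the bookkeeping of normalizations. One must verify that the one-disc contribution scaled by the rank matches the curvature bound exactly, in the form $\norm{\eta}_{L^\infty}^2\le 1/K$, and in particular that the transverse directions really contribute nothing. If that exact identity fails, I would fall back on bounding $\abs{\partial\psi}^2_\omega\le r/\kappa$ through the inequality $\partial\psi\otimes\dbar\psi\le \tfrac{r}{\kappa}\,\omega$. That inequality is equivalent to plurisubharmonicity of $-e^{-\kappa\psi/r}$, which I would check on the polydisc.
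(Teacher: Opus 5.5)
Your argument is correct, and its skeleton is the paper's. Both proofs exhibit a primitive $\eta$ of $\omega$ with $\norm{\eta}^2_{L^\infty}$ equal to the reciprocal of the modulus of the maximal holomorphic sectional curvature, then apply Theorem~\ref{thm:main_theorem2}. The difference is in how the two inputs are obtained. The paper quotes Theorem~1.1 of \cite{Choi_Lee_Seo2025Ar} for a potential $\varphi$ with $\abs{d^c\varphi}^2_\omega\equiv\sL_\Omega^2/(2\sK)$ \emph{constant}, and Section~4.1 there for the value $-2\sK/\sL_\Omega^2$ of the maximal holomorphic sectional curvature. You instead take the classical potential $-c\log h(z,z)$ and compute both quantities on a maximal polydisc. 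For this you use $K_0$-invariance and the standard, correct fact that $K_0$-orbits meet the flat orthogonally, so the gradient of a $K_0$-invariant function at a point of the flat is tangent to it. With $\kappa=2\sK/c_\Omega$, your constant $r/\kappa$ is exactly $rc_\Omega/(2\sK)=\sL_\Omega^2/(2\sK)$, so the two routes give the same bound. Your route is self-contained and shows that the constant-gradient property of the potential in \cite{Choi_Lee_Seo2025Ar} plays no role here, since only the supremum enters Theorem~\ref{thm:main_theorem2}. The paper's route is a two-line citation.

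Two points in your write-up need correcting.

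First, $-c\log h$ does not have constant gradient length. On $\mathbb{B}^n$, $\psi=-\log(1-\abs{z}^2)$ gives $\abs{\partial\psi}^2_\omega$ proportional to $\abs{z}^2$, so it is not the potential of the ball example. On a factor disc, $1/\kappa$ is the supremum of $\abs{d^c\psi}^2$, approached at the boundary, not its value. This is harmless for the argument.

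Second, in the reducible case you cannot argue factor by factor. The inequalities $K\le\kappa_i/r_i$ only give $\norm{\eta}^2_{L^\infty}=\sum_i r_i/\kappa_i\le s/K$. You need the diagonal disc $t\mapsto(t,\dots,t)$ of the full maximal polydisc $\DD^{r_1}\times\cdots\times\DD^{r_s}$. It is totally geodesic with curvature $-\paren{\sum_i r_i/\kappa_i}^{-1}$, exactly the reciprocal of your supremum; this is what the paper's $-2\sK/\sL_\Omega^2$ encodes.

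Your plurisubharmonicity fallback would also have to be checked in transverse directions, not just on the polydisc. The main route makes it unnecessary.
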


Note that Theorem~\ref{thm:main_cor} does not follow from \cite{McKean1970} or \cite{Setti1991}, since any bounded symmetric domain other than the unit ball admits directions along which the sectional curvature vanishes.
In~\cite{Choi_Lee_Seo2025Ar}, the authors obtained a sharp upper bound for the holomorphic sectional curvature of bounded symmetric domains equipped with the K\"ahler-Einstein metrics by introducing the K\"ahler-hyperbolicity length which is defined in terms of the rank and the generic norm.
Combined with this result, Theorem~\ref{thm:main_cor} provides a sharp lower bound for the bottom of the spectrum on irreducible bounded symmetric domains classified by \'E.~Cartan (see Section~\ref{subsec:irreducible_domains}).
\medskip

This paper is organized as follows. Section~\ref{sec:Preliminaries} reviews basic notions and results in K\"ahler geometry, including the primitive decomposition and $L^2$-Hodge theory. 
In Section~\ref{sec:Formula}, several pointwise identities which provide the key ingredients for the proofs of the main theorems are established.
Section~\ref{sec:proof_of_main_theorems} presents the proofs of Theorem~\ref{thm:main_theorem2} and Theorem~\ref{thm:main_theorem1}. 
Section~\ref{sec:middle_dimension} focuses on the bottom of the spectrum in the middle-dimensional case. 
Finally, in Section~\ref{sec:Spectrum_BSD}, we consider the bottom of the spectrum of bounded symmetric domains, proving Theorem~\ref{thm:main_cor} and obtaining explicit lower bounds for irreducible bounded symmetric domains.
\tableofcontents

\section{Preliminaries}
\label{sec:Preliminaries}
In this section, we review some basic terminology and results in K\"ahler geometry and also recapitulate the essential results of $L^2$-Hodge theory to be used throughout this paper. 
We refer the reader to \cite{Demailly-Book, Huybrechts-Book} for details.

\subsection{Basic hermitian geometry}
\label{subsec:basic_results}
	Let \( X \) be a complex manifold of complex dimension \( n \) and let $T_\CC^*X$ be the complexified cotangent bundle on $X$.
	The complexified exterior algebra $\bigwedge^\bullet T_\CC^*X:=\bigoplus_k\bigwedge^kT_\CC^*X$ admits a natural decomposition by type:
	\begin{equation}
	\label{eqn:pq_decomposition}
		\bigwedge\nolimits^{\!k}T_\CC^*X
		=
		\bigoplus_{p,q=k}\bigwedge\nolimits^{\!p,q}T^*X,
		\quad\text{for}\quad k=0,\cdots,2n,
	\end{equation}
	where $\bigwedge^{p,q}T^*X:=\bigwedge^p(T^*X)^{1,0}\otimes\bigwedge^q(T^*X)^{0,1}$.
	
	Suppose that $X$ admits a hermitian metric $g$ on $X$. 
	Then the associated K\"ahler form $\omega:=\omega_g$ is written as
	\[
	\omega = \ii \sum_{\alpha, \beta} g_{\alpha \bar{\beta}} \, dz^\alpha \wedge d\bar{z}^\beta,
	\]
	under a local holomorphic coordinates $(z^1,\ldots,z^n)$.
	It induces a canonical hermitian (sesquilinear) product on each $\bigwedge\nolimits^{\!p,q}T^*X$ so on $\bigwedge\nolimits^{\!k}T_\CC^*X$, which we denote by $\inner{\cdot,\cdot}_\omega$, with the associate pointwise norm $\abs{\cdot}_\omega$.
    The corresponding global inner product (i.e., $L^2$-inner product with respect to $dV_\omega$) and its induced norm are denoted by $\iinner{\cdot, \cdot}_\omega$ and $\norm\cdot_\omega$, respectively.
We shall omit the subscript $\omega$, if there is no confusion.
	
	The Hodge star operator $*:\bigwedge^kT_\CC^*X\rightarrow\bigwedge^{2n-k}T_\CC^*X$ is defined by the equation
    \begin{equation*}
    	\varphi\wedge *\ol\psi
		=
		\inner{\varphi,\psi}_\omega dV_\omega
    \end{equation*}
    for all $\varphi,\psi\in\bigwedge^\bullet T^*_\CC X$, where the volume form is given by $dV_\omega:=\omega^n/n!$.
    The Lefschetz operator $L:\bigwedge^{p,q}T^*X\rightarrow\bigwedge^{p+1,q+1}T^*X$ is defined by
    \begin{equation*}
    	L\varphi=\omega\wedge\varphi,
    \end{equation*}
    and the dual Lefschetz operator $\Lambda:\bigwedge^{p,q}T^*X\rightarrow\bigwedge^{p-1,q-1}T^*X$ is the adjoint operator of $L$ with respect to $\inner{\cdot,\cdot}_\omega$, which is, in fact, given as $\Lambda=*^{-1}\circ L\circ*$.

Let $\Omega^k(X)$ and $\Omega^{p,q}(X)$ denote the spaces of $k$-forms and $(p,q)$-forms, respectively. 
The exterior differentiation 
$$
d:\Omega^k(X)\rightarrow\Omega^{k+1}(X)
$$ 
has the formal adjoint operator $d^*$ is given by 
$$
d^*=-*\circ d\circ* : \Omega^{k+1}(X)\rightarrow\Omega^{k}(X).
$$ 
The second-order elliptic operator
\begin{equation*}
\Delta:=dd^*+d^* d:\Omega^k(X)\to \Omega^k(X),
\end{equation*}
is called the $\textit{Hodge Laplacian}$.
Recall that
$d=\partial+\bar{\partial}$ on each $\Omega^{p,q}(X)$ (hence on each $\Omega^{k}(X)$).
Then
\begin{equation*}
	\del^*
	:=
	-*\circ \bar{\del}\circ*:\Omega^{p+1,q}(X)\to\Omega^{p,q}(X),
	\quad\bar{\del}^*
	=
	-*\circ \del\circ*:\Omega^{p,q+1}(X)\to\Omega^{p,q}(X)
\end{equation*}
are the formal adjoints of $\del$ and $\dbar$ with respect to $\iinner{\cdot,\cdot}_\omega$, respectively.
The Laplacians on $\Omega^{p,q}(X)$ associated with the operators $\del, \bar{\del}$ are 
\begin{equation*}
\Delta_\del=\del\del^*+\del^*\del,
\quad
\Delta_{\dbar}=\bar{\del}\,\bar{\del}^*+\bar{\del}^*\bar{\del}.
\end{equation*}
If $(X,\omega)$ is K\"ahler, then we have the following fundamental identities.
\begin{equation}\label{eq:Kahler_identity}
\Delta_\del=\Delta_{\dbar}=\frac{1}{2}\Delta,
\end{equation}
and $\Delta$ commutes with $*,\del,\dbar,\del^*,\dbar^*, L,$ and $\Lambda$.
\bigskip

An element $\varphi\in\bigwedge^k T^*X$ is called primitive if $\Lambda\varphi=0$.
The linear subspace of all primitive elements $\varphi\in\bigwedge^k T^*_\CC X$ is denoted by $P_\CC^kX\subset\bigwedge^k T_\CC^*X$.
Then every element of $\bigwedge^k T_\CC^*X$ is decomposed into primitive elements as follows.
\begin{proposition}
[Proposition 1.2.30 in \cite{Huybrechts-Book}]
\label{prop:Lefschetz_decomposition}        
We have the following.
\begin{enumerate}
		\item There exists a direct sum decomposition of the form:
		\begin{equation*}
				\bigwedge\nolimits^{\!k}T_\CC^*X
				=
				\bigoplus_{i\ge0}L^r \paren{P_\CC^{k-2i}X}.
		\end{equation*}
This is the Lefschetz decomposition.
Moreover, this decomposition is orthogonal with respect to $\inner{\cdot,\cdot}_\omega$.
		\item If $k>n$, then $P_\CC^kX=0$.
		\item The map $L^{n-k}:P_\CC^k\rightarrow\bigwedge^{2n-k}T_\CC^*X$ is injective for $k\le n$.
		\item The map $L^{n-k}:\bigwedge^{k}T_\CC^*X\rightarrow\bigwedge^{2n-k}T_\CC^*X$ is bijective for $k\le n$.
		\item If $k\le n$, then 
		$P_\CC^k=\set{u\subset\bigwedge^kT_\CC^kX:L^{n-k+1}u=0}$.
\end{enumerate}
\end{proposition}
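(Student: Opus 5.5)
The statement is pointwise linear algebra on a single fibre, so I would fix a point $x\in X$ and pick a unitary coframe $e^1,\dots,e^n$ of $(T^*_xX)^{1,0}$. In this frame $\omega=\ii\sum_j e^j\wedge\bar e^j$. This identifies $\bigwedge^\bullet T^*_{\CC,x}X$ with the exterior algebra of $\CC^n\oplus\ol{\CC^n}$ with the standard hermitian product. The key input is the $\mathfrak{sl}_2$-structure: the commutator $[L,\Lambda]$ acts on $\bigwedge^k$ as multiplication by $(k-n)$, and $H:=[L,\Lambda]$ satisfies $[H,L]=2L$ and $[H,\Lambda]=-2\Lambda$. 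I would verify this directly in the frame. Write $L=\sum_j L_j$ with $L_j=\ii e^j\wedge\bar e^j\wedge$, and $\Lambda=\sum_j\Lambda_j$ with $\Lambda_j$ the adjoint of $L_j$. For $i\neq j$ one has $[L_i,\Lambda_j]=0$, while $[L_j,\Lambda_j]$ equals $+1$ when both $e^j,\bar e^j$ occur in a monomial, $-1$ when neither occurs, and $0$ otherwise. Summing over $j$ on a monomial of degree $k$ gives $k-n$. Given the commutator, everything else follows from finite-dimensional $\mathfrak{sl}_2$-representation theory.

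\emph{Items (1) and (2).} For $u\in P^k$, induction with $[\Lambda,L]=n-k$ gives
\[
\Lambda L^r u=r(n-k-r+1)L^{r-1}u .
\]
Hence
\[
\abs{L^r u}^2=\inner{\Lambda L^r u,L^{r-1}u}=r(n-k-r+1)\abs{L^{r-1}u}^2,
\]
and by induction $\abs{L^ru}^2=\frac{r!\,(n-k)!}{(n-k-r)!}\abs{u}^2$ for $r\le n-k$. Three consequences follow.
\begin{enumerate}
\item $L^ru\neq 0$ exactly for $r\le n-k$. This gives (3) and the one-sided part of (5): every primitive $u$ satisfies $L^{n-k+1}u=0$.
\item If $k>n$, then $[\Lambda,L]u=(n-k)u$ on a primitive $u$ gives $\Lambda L u=(n-k)u$. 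Pairing with $u$ yields $\abs{Lu}^2=(n-k)\abs{u}^2\le 0$ with $n-k<0$, so $u=0$. This is (2).
\item Orthogonality: for $u\in P^{k-2i}$ and $v\in P^{k-2j}$ with $i>j$, move $\Lambda^{j+1}$ across, giving $\inner{L^iu,L^jv}=c\,\inner{L^{i-j-1}u,\Lambda v}=0$.
\end{enumerate}

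\emph{Existence of the decomposition.} I would argue by dimension count. Alternatively, $\bigwedge^\bullet$ is a finite-dimensional $\mathfrak{sl}_2$-module, hence a direct sum of irreducibles. Each irreducible is generated by a lowest weight vector, killed by $\Lambda$, i.e.\ primitive. Its other elements are $L^r$ of that vector, so every $\varphi\in\bigwedge^k$ is a sum $\sum_i L^iu_i$ with $u_i\in P^{k-2i}$.

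\emph{Items (4) and (5).} For (4), decompose $\varphi=\sum_i L^iu_i$. Then $L^{n-k}\varphi=\sum_iL^{n-k+i}u_i$, and the exponent $n-k+i$ is at most $n-(k-2i)$ because $i\ge0$. So each term is nonzero when $u_i\neq0$, and the terms are mutually orthogonal by the orthogonality above. Hence $L^{n-k}$ is injective, and bijective because $\dim\bigwedge^k=\dim\bigwedge^{2n-k}$. For the remaining direction of (5), suppose $L^{n-k+1}\varphi=0$. Then each $L^{n-k+1+i}u_i=0$ by orthogonality. For $i\ge1$ this exponent is at most $n-(k-2i)$, which forces $u_i=0$, so $\varphi=u_0$ is primitive.

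I expect the main obstacle to be checking the commutator $[L,\Lambda]=(k-n)$ carefully, including sign conventions for the adjoint of $e^j\wedge\bar e^j$. Once that is settled, the argument reduces to the standard $\mathfrak{sl}_2$ computations above.
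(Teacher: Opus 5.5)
Your proposal is correct. It is essentially the standard argument: the paper does not prove this statement but cites Huybrechts, Proposition~1.2.30, whose proof uses the same ingredients as yours. These are the $\mathfrak{sl}_2$-triple $(L,\Lambda,H)$ with $[L,\Lambda]=H=(k-n)$ on $k$-forms, the identity $\Lambda L^r u=r(n-k-r+1)L^{r-1}u$ for primitive $u$, and finite-dimensional $\mathfrak{sl}_2$-representation theory, together with the adjointness argument for orthogonality. Your norm formula $\abs{L^ru}^2=\frac{r!\,(n-k)!}{(n-k-r)!}\abs{u}^2$ is exactly what the paper later records as Proposition~\ref{prop:inner_product_Lefschetz}.
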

	
The following propositions are crucial for the computation of the hermitian inner products on the exterior tensor bundles.
	
\begin{proposition}[Proposition 1.2.31 in \cite{Huybrechts-Book}]
\label{prop:star of L}
	For $\varphi\in P_\CC^kX$, one has
	\begin{equation*}
			*L^r\varphi
			=
			\ii^{k(k+1)}
			\frac{r!}{(n-k-r)!}
			\cdot
			L^{n-k-r}\mathbf{I}(\varphi),
	\end{equation*}
	where
	\begin{equation*}
			\mathbf{I}=\sum\ii^{p-q}\Pi_{p,q},
	\end{equation*}
	and $\Pi_{p,q}$ denotes the projection onto the $(p,q)$-component with respect to the decomposition~\eqref{eqn:pq_decomposition}.
\end{proposition}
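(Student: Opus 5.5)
The statement is pointwise, so I would fix $x\in X$ and choose a unitary coframe $dz^1,\dots,dz^n$ at $x$ with $\omega_x=\frac{\ii}{2}\sum_j dz^j\wedge d\bar z^j$ (or the normalization matching $\omega$ above). The claim then becomes a statement about the Hermitian vector space $V=\CC^n$ with its standard structure. The plan has two steps. First, I reduce the general case $r\ge0$ to the case $r=0$ using the $\mathfrak{sl}_2$-structure generated by $L$ and $\Lambda$. Second, I prove the case $r=0$ by a direct computation on a spanning set of primitive forms.

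For the reduction I use the standard commutation relations: $[L,\Lambda]=(k-n)$ on $k$-forms, the relation $\Lambda=*^{-1}L*$, and $*^2=(-1)^k$ on real $k$-forms, which gives $*L=\Lambda *$ up to sign conventions on degree. For primitive $\varphi\in P^k$ with $k\le n$, the usual induction gives
\begin{equation*}
\Lambda L^{s}\varphi=s(n-k-s+1)L^{s-1}\varphi .
\end{equation*}
Suppose the case $r=0$ is known, so that $*\varphi=\ii^{k(k+1)}\frac{1}{(n-k)!}L^{n-k}\mathbf I(\varphi)$. I then proceed by induction on $r$. Write $*L^{r+1}\varphi=*L\,L^r\varphi=\Lambda\,*L^r\varphi$; I will check the sign carefully, using that $*$ and $\mathbf I$ commute with $L$. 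By the inductive hypothesis this equals $\ii^{k(k+1)}\frac{r!}{(n-k-r)!}\Lambda L^{n-k-r}\mathbf I(\varphi)$. Since $\mathbf I(\varphi)$ is again primitive, the displayed formula with $s=n-k-r$ gives the factor $(n-k-r)(r+1)$. Hence
\begin{equation*}
*L^{r+1}\varphi=\ii^{k(k+1)}\frac{(r+1)!}{(n-k-r-1)!}L^{n-k-r-1}\mathbf I(\varphi),
\end{equation*}
which is exactly the claimed formula for $r+1$.

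For the base case $r=0$, I would use the fact that $P^k$ is spanned by products of the following form. Split the indices $\{1,\dots,n\}$ into disjoint blocks. On some blocks put holomorphic factors $dz^j$; on others put antiholomorphic factors $d\bar z^j$; and on the remaining blocks put "primitive pairs" $dz^j\wedge d\bar z^j-dz^l\wedge d\bar z^l$. These span because the $U(n)$-orbit of such elements spans each irreducible $P^{p,q}$. Both sides of the formula are linear in $\varphi$, so it suffices to check it on these elements. The basic tool is multiplicativity: for an orthogonal splitting $V=W\oplus W'$, the Hodge star satisfies $*(\alpha\wedge\beta)=\pm *_W\alpha\wedge *_{W'}\beta$, with an explicit degree-dependent sign. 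Likewise $L^{m}$ of $\omega_W+\omega_{W'}$ expands binomially. With these two facts the check reduces to the one-dimensional building blocks: $*dz=\pm\ii\,dz\wedge\omega'$-type identities on $\CC^1$, together with the two-dimensional identity $*(dz^1\wedge d\bar z^1-dz^2\wedge d\bar z^2)=-(dz^1\wedge d\bar z^1-dz^2\wedge d\bar z^2)$.

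I expect the main obstacle to be the bookkeeping in the base case. One part is tracking the signs $\ii^{k(k+1)}$ and $\ii^{p-q}$ through the multiplicativity formula. The other is checking that, after the binomial expansion of $\omega^{n-k}$, only the terms involving the complementary directions survive when wedged with the primitive factor, so that the coefficient comes out as exactly $1/(n-k)!$. As a cross-check I would first verify the answer via norms: pairing $\varphi\wedge *\bar\varphi=|\varphi|^2dV_\omega$ with the claimed expression, and using $dV_\omega=\omega^n/n!$, determines the constant up to the unimodular factor. I would then use the explicit building-block computation only to pin down the phase $\ii^{k(k+1)}\mathbf I$.
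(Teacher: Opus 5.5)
The paper gives no proof of this proposition; it is quoted from Huybrechts (note $\ii^{k(k+1)}=(-1)^{k(k+1)/2}$), so there is no in-paper argument to compare with. Your proof is correct and self-contained: induction on $r$ via the $\mathfrak{sl}_2$ relations, then a direct check of $r=0$ on a spanning set of primitive forms. A few points should be stated accurately.

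\emph{The induction step.} You say $*$ commutes with $L$; it does not. What you use, and what holds with no sign at all, is $*L=\Lambda *$. From $\Lambda=*^{-1}L*$ one gets $*L*^{-1}=*^{2}\Lambda *^{-2}=\Lambda$, since $*^2=(-1)^{\deg}$ and $\Lambda$ lowers degree by $2$. You also need $\mathbf I L=L\mathbf I$ and $\mathbf I\Lambda=\Lambda\mathbf I$ (both operators preserve $p-q$), so that $\mathbf I(\varphi)$ is primitive. With these, your factor $(n-k-r)(r+1)$ is right.

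\emph{The spanning set.} Your justification (irreducibility of $P^{p,q}$ under $U(n)$) also requires noting that both sides of the formula are $U(n)$-equivariant. Once you say that, the single element $dz^1\wedge\cdots\wedge dz^p\wedge d\bar z^{p+1}\wedge\cdots\wedge d\bar z^{p+q}$ already suffices, and the primitive pairs are unnecessary. If instead you want to avoid representation theory and work in one fixed coframe, products with disjoint pairs $\pi=dz^a\wedge d\bar z^a-dz^b\wedge d\bar z^b$ do span, but this is not automatic. For fixed disjoint holomorphic and antiholomorphic index sets, it amounts to the classical fact that products $\prod_t(u_{a_t}-u_{b_t})$ span the kernel of the lowering operator on square-free degree-$s$ monomials in the $m$ remaining variables $u_j=dz^j\wedge d\bar z^j$. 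That kernel has dimension $\binom{m}{s}-\binom{m}{s-1}$.

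\emph{The base case.} The non-complementary terms of $\omega^{n-k}\wedge\varphi$ do not vanish individually. Since $\omega_a\wedge\pi=-\omega_b\wedge\pi\neq 0$, they cancel in pairs, while terms containing both $\omega_a$ and $\omega_b$ vanish. What remains is $\frac{1}{(n-k)!}\omega^{n-k}\wedge\varphi=\varphi\wedge\prod_{m\in R}\omega_m$, with $R$ the $n-k$ unused indices. Take $\varphi=dz^I\wedge d\bar z^J\wedge\pi_1\wedge\cdots\wedge\pi_s$ with $|I|=p'$ and $|J|=q'$. Your multiplicativity computation uses the sign $(-1)^{\binom{p'+q'}{2}}$ from reordering the odd factors, together with $*dz=-\ii\,dz$, $*d\bar z=\ii\,d\bar z$, $*\pi=-\pi$ and $*1=\omega_m$. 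It gives the phase $(-1)^{\binom{p'+q'}{2}+s}\ii^{q'-p'}$, which equals $\ii^{k(k+1)}\ii^{p-q}$ for $k=p'+q'+2s$. So the check closes.
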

    
The Hodge-Riemann pairing $Q:\bigwedge^k T^*X\times\bigwedge^kT^*X\rightarrow\bigwedge^{2n}T^*X$ is defined by
\begin{equation*}
	Q(\varphi,\psi)=\ii^{k(k-1)}\omega^{n-k}\wedge\varphi\wedge\psi.
\end{equation*}
This pairing can be extended $\CC$-linearly to $\bigwedge^\bullet T_\CC^*X$, which is again denoted by $Q$.
The following formula is known as the Hodge-Riemann bilinear relation. 
\begin{proposition}[Corollary 1.2.36 in \cite{Huybrechts-Book}]
	For $\alpha,\beta\in P_\CC^{p,q}X$, one has
	\begin{equation*}
		\ii^{p-q}Q(\alpha,\ol\beta)
		=
		(n-(p+q))!\inner{\alpha,\beta}_\omega dV_\omega.
	\end{equation*}
\end{proposition}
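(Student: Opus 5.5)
This is the Hodge--Riemann bilinear relation, which I would derive from Proposition~\ref{prop:star of L} with $r=0$ and the defining property of the Hodge star.

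\textbf{Setup.} Let $\alpha,\beta\in P_\CC^{p,q}X$ and put $k=p+q$. Then $\ol\beta$ is a primitive $(q,p)$-form, since $\Lambda$ is a real operator. Applying Proposition~\ref{prop:star of L} with $r=0$ to $\ol\beta$ gives
\[
*\ol\beta=\ii^{k(k+1)}\frac{1}{(n-k)!}L^{n-k}\mathbf I(\ol\beta),\qquad \mathbf I(\ol\beta)=\ii^{q-p}\ol\beta .
\]
By the definition of $*$ (with the convention that $*$ is $\CC$-linear), $\alpha\wedge*\ol\beta=\inner{\alpha,\beta}_\omega\,dV_\omega$.

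\textbf{Key computation.} Substituting, and commuting the even form $\omega^{n-k}$ past $\alpha$,
\[
\inner{\alpha,\beta}\,dV_\omega=\frac{\ii^{k(k+1)}\ii^{q-p}}{(n-k)!}\,\omega^{n-k}\wedge\alpha\wedge\ol\beta
=\frac{\ii^{k(k+1)}\ii^{q-p}}{(n-k)!}\,\ii^{-k(k-1)}Q(\alpha,\ol\beta).
\]
Multiplying by $(n-k)!$, it remains to check the phase identity
\[
\ii^{k(k+1)-k(k-1)+q-p}=\ii^{2k+q-p}=\ii^{p-q},
\]
that is, $\ii^{2k-2(p-q)}=1$. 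Since $2k-2(p-q)=4q$, this holds.

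\textbf{Main obstacle.} The argument is pure bookkeeping. The only care needed is with conventions: whether the star is $\CC$-linear or conjugate-linear, and the sign in $\mathbf I$ for the conjugate type $(q,p)$. If a stray sign appears, I would check the case $k=0$, where the relation reads $Q(\alpha,\ol\beta)=n!\,\alpha\ol\beta\,dV_\omega$ and $\omega^n=n!\,dV_\omega$, to fix the normalization.
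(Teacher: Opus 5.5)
Your derivation is correct and is essentially the standard one. The paper does not reprove this statement but cites Huybrechts, and there it is obtained exactly as you do: take $r=0$ in the formula for $*L^r$ (Proposition~\ref{prop:star of L}), apply it to the primitive form $\ol\beta$, and compare with $\alpha\wedge*\ol\beta=\inner{\alpha,\beta}_\omega dV_\omega$; the phase then reduces to $\ii^{4q}=1$, and the paper's proof of Proposition~\ref{prop:inner_product_Lefschetz} runs the same phase bookkeeping in reverse.
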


\subsection{$L^2$-Hodge theory}
Let $(X,g)$ be a Riemannian manifold.
As in Section~\ref{subsec:basic_results}, the metric $g$ induces an inner product $\iinner{\cdot,\cdot}_g$ on $\Omega^k(X)$ for each integer $k\geq 0$ and 
\begin{equation*}
 \Ltwo{k}(X):=\set{u:u~\text{is a $k$-form on $X$ with measurable coefficients and}~\norm{u}_{g}<+\infty},
\end{equation*}
equipped with $\iinner{\cdot,\cdot}_g$ becomes a Hilbert space. 
Since the set $\mathcal D^k(X)$ of smooth $k$-forms on $X$ with compact support is dense in $\Ltwo{k}(X)$, the operators
\begin{equation*}
d:\Omega^k(X)\rightarrow \Omega^{k+1}(X),~d^*: \Omega^{k}(X)\rightarrow \Omega^{k-1}(X),~\Delta:\Omega^k(X)\rightarrow\Omega^k(X)
\end{equation*}
can be extended in the sense of distribution to closed, densely defined, unbounded operators
\begin{equation*}
    d:\Ltwo{k}(X)\rightarrow \Ltwo{k+1}(X),~d^*:\Ltwo{k}(X)\rightarrow \Ltwo{k-1}(X),~\Delta:\Ltwo{k}(X)\rightarrow\Ltwo{k}(X),
\end{equation*}
respectively. We denote by  $\dom d$, $\dom d^*$, $\dom \Delta$ the domains of the extended operators. We also denote by $\textup{Im}\,d$, $\textup{Im}\,d^*$ the images of $d$ and $d^*$, respectively.

The celebrated $L^2$-Hodge theory is as follows.
\begin{theorem}[Ch.VIII (3.2) Theorem in \cite{Demailly-Book}]\label{thm:Hodge_theory}
    If $(X,g)$ is a complete Riemannian manifold, then the following holds.
    \begin{enumerate}
        \item The space of smooth $k$-forms with compact support $\mathcal D^k(X)$ is dense in $\dom d$, $\dom d^*$ and $\dom d\cap\dom d^*$ respectively for the graph norms
        \begin{equation*}
            u\mapsto\norm u_g+\norm{du}_g,
            \quad
            u\mapsto\norm u_g+\norm{d^* u}_g,
            \quad
            u\mapsto\norm u_g+\norm{du}+\norm{d^* u}_g.
        \end{equation*}
        \item $d_\harm^*=d^*, d_\harm^{**}=d$ as Hilbert adjoint operators where $d_\harm^*$ is the Hilbert \textup{(}von Neumann\textup{)} adjoint.
        \item One has $\iinner{u,\Delta u}_g=\norm{du}_g^2+\norm{d^* u}_g^2$ for every $u\in\dom\Delta$.
        In particular,
        \begin{equation*}
            \dom\Delta\subset\dom d\cap\dom d^*,
            \quad
            \ker\Delta=\ker d\cap\ker d^*,
        \end{equation*}
        and $\Delta$ is self-adjoint.
        \item There are orthogonal decompositions
        \begin{eqnarray*}
            L^k_{(2)}(X)&=&\mathcal H^k_{(2)}(X)
            \oplus\ol{\im\; d}\oplus\ol{\im\;d^*}
            \\
            \ker d&=&\mathcal H^k_{(2)}(X)\oplus\ol{\im\; d}
        \end{eqnarray*}
        where $\mathcal H^k_{(2)}(X)=\set{u\in L^k_{(2)}(X):\Delta u=0}\subset\Omega^k(X)$ is the space of $L^2$-harmonic $k$-forms.
    \end{enumerate}
\end{theorem}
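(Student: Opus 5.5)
The statement is the standard $L^2$-Hodge theory on a complete Riemannian manifold. I would prove it with Gaffney's cut-off argument, using completeness only at one point. Completeness gives an exhaustion by compact sets and cut-off functions $\chi_j\in C_0^\infty(X)$ with $0\le\chi_j\le1$, $\chi_j\to1$ pointwise, and $\sup|d\chi_j|_g\le 1/j$. For example, take $\chi_j=\rho(r/j)$, where $r$ is a smoothed distance function to a base point and $\rho$ is a fixed cut-off on $\RR$.

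\textbf{Part (1): density.} Let $u\in\dom d$, with $d$ understood distributionally. First cut off:
\begin{equation*}
d(\chi_j u)=\chi_j\,du+d\chi_j\wedge u,
\end{equation*}
and $\norm{d\chi_j\wedge u}\le \norm u/j\to0$, so $\chi_j u\to u$ in the graph norm. Then regularize the compactly supported form $\chi_j u$ with Friedrichs mollifiers in finitely many coordinate charts, using a partition of unity. Friedrichs' lemma says that the commutator of $d$ with the mollifier tends to $0$ in $L^2$, so the mollified forms converge in the graph norm. The same argument works for $d^*$, using
\begin{equation*}
d^*(\chi_j u)=\chi_j d^*u-\iota_{\nabla\chi_j}u,
\end{equation*}
and for the combined graph norm.

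\textbf{Part (2): adjoints.} One inclusion is formal: if $u\in\dom d_\harm^*$, testing against $\mathcal D^\bullet(X)$ shows that the distributional $d^*u$ lies in $L^2$ and equals $d_\harm^*u$. For the converse, take $u\in\dom d^*$ (distributional) and $v\in\dom d$. By (1), approximate $v$ by compactly supported smooth forms $v_\nu$ in the graph norm of $d$. Integrating by parts gives $\iinner{dv_\nu,u}=\iinner{v_\nu,d^*u}$, and passing to the limit gives $u\in\dom d_\harm^*$. Hence $d^*_\harm=d^*$; applying the same reasoning with the roles of $d$ and $d^*$ exchanged gives $d_\harm^{**}=d$.

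\textbf{Part (3): Gaffney's identity.} I expect this to be the main obstacle, because a priori $u\in\dom\Delta$ only satisfies $u,\Delta u\in L^2$. Ellipticity of $\Delta$ gives $u\in H^2_{\mathrm{loc}}$, which justifies the local integrations by parts. For this $u$ I would compute
\begin{equation*}
\iinner{\Delta u,\chi_j^2u}=\norm{\chi_j du}^2+\norm{\chi_j d^*u}^2+\text{(cross terms)},
\end{equation*}
where the cross terms are of the form $2\iinner{\chi_j du,\,d\chi_j\wedge u}$ and its analogue for $d^*$. By Cauchy--Schwarz these are bounded by $\tfrac12\big(\norm{\chi_j du}^2+\norm{\chi_j d^*u}^2\big)+C\norm u^2/j^2$. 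Absorbing them shows that $\norm{\chi_jdu}$ and $\norm{\chi_jd^*u}$ are uniformly bounded, so $u\in\dom d\cap\dom d^*$. Letting $j\to\infty$ then yields the identity $\iinner{u,\Delta u}=\norm{du}^2+\norm{d^*u}^2$. From the identity, $\ker\Delta=\ker d\cap\ker d^*$ is immediate. For self-adjointness, (2) shows $\Delta=dd^*_\harm+d^*_\harm d$ as a sum of operators of the form $T T^*$ built from closed operators. By von Neumann's theorem each such $TT^*$ is self-adjoint and nonnegative, and since $\operatorname{Im}d\perp\operatorname{Im}d^*$ their sum is self-adjoint as well.

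\textbf{Part (4): decomposition.} Since $d^2=0$, we have $\operatorname{Im}d\perp\operatorname{Im}d^*$. By (2),
\begin{equation*}
(\ol{\operatorname{Im}d})^\perp=\ker d^*,\qquad (\ol{\operatorname{Im}d^*})^\perp=\ker d.
\end{equation*}
Therefore the orthogonal complement of $\ol{\operatorname{Im}d}\oplus\ol{\operatorname{Im}d^*}$ is $\ker d\cap\ker d^*$, which equals $\harm^k_{(2)}(X)$ by (3). These forms are smooth by elliptic regularity. This gives the three-term decomposition, and intersecting it with $\ker d=(\ol{\operatorname{Im}d^*})^\perp$ gives $\ker d=\harm^k_{(2)}\oplus\ol{\operatorname{Im}d}$.
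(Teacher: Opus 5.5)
Your route is the standard one, and it is the proof the paper relies on: the paper gives no argument and cites Demailly, Ch.~VIII, (3.2). That proof also uses cut-offs with $|d\chi_j|\to 0$ plus Friedrichs mollification for (1), density to get $d^*_\harm=d^*$ for (2), the cut-off Gaffney computation with local elliptic regularity for the identity in (3), and $(\im T)^\perp=\ker T^*$ for (4). Parts (1), (2), (4) and the identity in (3) are fine as you wrote them. The one step that does not go through as written is the self-adjointness of $\Delta$.

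In the statement, $\Delta$ is the \emph{maximal} operator, with $\dom\Delta=\{u\in L^2:\Delta u\in L^2\}$ taken distributionally. What (2) and von Neumann give you is that $S:=dd^*_\harm+d^*_\harm d$, with $\dom S=\{u\in\dom d\cap\dom d^*:\ d^*u\in\dom d,\ du\in\dom d^*\}$, is self-adjoint; your reduction along $\ol{\im\,d}\perp\ol{\im\,d^*}$ is correct. But you only have $S\subset\Delta$. For $u\in\dom\Delta$, your Gaffney step yields $du,d^*u\in L^2$. It does not yield that $dd^*u$ and $d^*du$ lie separately in $L^2$, and (2) says nothing about this either.

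The sandwich $\ol{\Delta|_{\mathcal D}}\subset S\subset\Delta$ with $S$ self-adjoint does not force $S=\Delta$. For example, the Dirichlet Laplacian on an interval sits between the minimal Laplacian and the non-self-adjoint maximal one. So completeness has to enter once more.

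The fix is short and makes von Neumann unnecessary.
\begin{itemize}
\item For $u\in\dom\Delta$ you know $du,d^*u\in L^2$, so the definition of distributional derivatives gives $\iinner{\Delta u,v}=\iinner{du,dv}+\iinner{d^*u,d^*v}$ for $v\in\mathcal D^k(X)$.
\item By (1), this extends to all $v\in\dom d\cap\dom d^*$, which contains $\dom\Delta$.
\item The right-hand side is Hermitian, so $\Delta$ is symmetric, i.e.\ $\Delta\subset\Delta^*$.
\item By definition of the distributional extension, $\Delta=(\Delta|_{\mathcal D})^*$, hence $\Delta^*=\ol{\Delta|_{\mathcal D}}\subset\Delta$.
\end{itemize}
Therefore $\Delta=\Delta^*$. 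A posteriori you also get $S=\Delta$, since two self-adjoint operators with $S\subset\Delta$ must coincide.
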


\begin{remark}\label{rmk:bottom of the spectrum}
We define the bottom of the spectrum $\lambda_0^k(X)$ of $\Delta$ on $\Omega^k(X)$ by
\[
\lambda_0^k(X)= \inf\set{\norm{du}^2_g+\norm{d^* u}^2_g:u\in\mathcal{D}^k(X),\,\norm{u}_g =1}.
\]
Then Theorem~\ref{thm:Hodge_theory} implies that
\begin{equation*}
	\iinner{\Delta\varphi,\varphi}
        \ge
        \lambda_0^k(X)
        \norm{\varphi}^2\quad\text{for all}\;\;\varphi \in \dom d.
\end{equation*}
Hence Theorem~\ref{thm:main_theorem1} yields that
\begin{equation*}
	\lambda_0^k(X)\ge c_k.
\end{equation*}
The bottom of the spectrum $\lambda_0^{p,q}(X)$ is defined analogously, and the same conclusion holds in this case as well.
\end{remark}


\begin{remark}\label{rmk:Hodge_theory}
If $(X,\omega)$ is a complete hermitian manifold, then analogous results hold for $\del$ and $\dbar$, replacing $k$-forms with $(p,q)$-forms. 
More precisely, 
if we denote by $\Ltwo{p,q}(X)$ the set of $(p,q)$-forms on $X$ with measurable coefficients and finite $L^2$-norm, then the set $\Ltwo{p,q}(X)$ equipped with $\iinner{\cdot,\cdot}_g$ is a Hilbert spaces for each  $(p,q)$. 
Since the set  $\mathcal D^{p,q}(X)$ of smooth $(p,q)$-forms on $X$ with compact support is dense in $\Ltwo{p,q}(X)$, the operators
	\[
	\quad\bar{\partial}:\Omega^{p,q}(X)\to\Omega^{p,q+1}(X),
	~\dbar^{\ast}:\Omega^{p,q}(X)\to\Omega^{p,q-1}(X),
	~\Delta_{\bar{\partial}}:\Omega^{p,q}(X)\to\Omega^{p,q}(X)
	\]
	extend in the sense of distribution to closed, densely defined, unbounded operators
	\begin{equation*}
		\quad\bar{\partial}:\Ltwo{p,q}(X)\rightarrow \Ltwo{p,q+1}(X),
		~\bar{\partial}^{\ast}:\Ltwo{p,q}(X)\rightarrow \Ltwo{p,q-1}(X),
		~\Delta_{\bar{\partial}}:\Ltwo{p,q}(X)\rightarrow \Ltwo{p,q}(X),
	\end{equation*}
respectively. Then it is known that Theorem  \ref{thm:Hodge_theory} also holds for the extended operators $\bar{\partial}$, $\bar{\partial}^{\ast}$, $\Delta_{\bar{\partial}}$. 
	
Moreover, if $(X,\omega)$ is K\"ahler, then harmonic forms with respect to $d,\del,\dbar$ coincide by \eqref{eq:Kahler_identity}.
This leads to the Hodge decomposition of differential forms:
\begin{equation*}
\Ltwoharm^k(X)=\bigoplus_{p+q=k}\Ltwoharm^{p,q}(X),
\end{equation*}
where $\harm^{p,q}_{(2)}(X):=\set{u\in \dom\Delta_{\bar{\partial}}:\Delta_{\bar{\partial}} u=0,~\norm{u}_g<+\infty}$.
For further details, readers are referred to \cite{Demailly-Book, Huybrechts-Book}.
\end{remark}

\section{Auxiliary Formulae}
\label{sec:Formula}

In this section, we derive several formulae concerning the norms of differential forms.
These results will play a crucial role in the proofs presented in the subsequent section.

\begin{proposition}\label{prop:inner_product_Lefschetz}
If $\varphi,\psi\in P^kX$ and $k\le n$, then
	\begin{equation*}
		\inner{L^j\varphi,L^j\psi}_\omega
		=
		\frac{j!(n-k)!}{(n-k-j)!}\inner{\varphi,\psi}_\omega,
	\end{equation*}
    for any $j\in \{0,\dots, n-k\}.$
    In particular, if $j=n-k$ and $j=n-k-1$, then
    \begin{equation*}
		\inner{L^{n-k}\varphi,L^{n-k}\psi}_\omega
		=
		(n-k)!^2\inner{\varphi,\psi}_\omega,
	\end{equation*}
	and
	\begin{equation*}
		\inner{L^{n-k-1}\varphi,L^{n-k-1}\psi}_\omega
		=
		(n-k-1)!(n-k)!\inner{\varphi,\psi}_\omega,
	\end{equation*}
	respectively.
\end{proposition}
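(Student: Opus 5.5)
Since $L$ and $\Lambda$ are pointwise adjoint, I would move all the $L$'s to one side:
\[
\inner{L^j\varphi,L^j\psi}_\omega=\inner{\Lambda^jL^j\varphi,\psi}_\omega .
\]
It then suffices to compute $\Lambda^j L^j\varphi$ for primitive $\varphi$. I would use the $\mathfrak{sl}_2$ commutation relation $[L,\Lambda]=(k-n)\,\mathrm{Id}$ on $k$-forms, the standard K\"ahler identity underlying the Lefschetz decomposition of Proposition~\ref{prop:Lefschetz_decomposition}. Iterating it gives, for $\varphi\in P^kX$ and $i\ge1$,
\[
[\Lambda,L^i]\varphi = i\,(n-k-i+1)\,L^{i-1}\varphi .
\]
I would prove this by induction on $i$. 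Write $[\Lambda,L^i]=[\Lambda,L]L^{i-1}+L[\Lambda,L^{i-1}]$. The form $L^{i-1}\varphi$ has degree $k+2i-2$, so $[\Lambda,L]$ acts on it as multiplication by $n-k-2i+2$. Combining this with the induction hypothesis gives the coefficient
\[
(n-k-2i+2)+(i-1)(n-k-i+2)=i(n-k-i+1).
\]
Since $\Lambda\varphi=0$, we get $\Lambda L^i\varphi=i(n-k-i+1)L^{i-1}\varphi$.

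Applying this $j$ times yields
\[
\Lambda^jL^j\varphi=\prod_{i=1}^{j} i(n-k-i+1)\,\varphi = j!\,\frac{(n-k)!}{(n-k-j)!}\,\varphi ,
\]
which is exactly the claimed constant. Pairing with $\psi$ gives the formula. The special cases $j=n-k$ and $j=n-k-1$ are immediate substitutions: $(n-k)!\,(n-k)!/0!$ and $(n-k-1)!\,(n-k)!/1!$. The restriction $j\le n-k$ keeps all the factors nonnegative and is consistent with $L^{n-k+1}\varphi=0$ from Proposition~\ref{prop:Lefschetz_decomposition}(5).

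The main obstacle is securing the sign convention of $[L,\Lambda]=(k-n)$ in the paper's normalization, where $\omega=\ii\sum g_{\alpha\bar\beta}dz^\alpha\wedge d\bar z^\beta$ and the inner product is hermitian. Any normalization discrepancy would rescale $\Lambda$ and change the constants.

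As a cross-check, I would verify the case $k=0$, $\varphi=1$, $j=n$ directly: $|\omega^n|^2=|n!\,dV_\omega|^2=n!^2$, which matches $(n-k)!^2$.

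Alternatively, I could avoid the commutator entirely. Use Proposition~\ref{prop:star of L} to write $*L^j\psi$ as $\frac{j!}{(n-k-j)!}L^{n-k-j}\mathbf I(\ol\psi)$ up to a unit, then evaluate $L^j\varphi\wedge *\ol{L^j\psi}$ as a Hodge–Riemann pairing of primitive forms. The Hodge–Riemann relation (Corollary 1.2.36) then supplies the factor $(n-k)!$. This route needs care with the powers of $\ii$, and it also needs the reduction to pure-type components, since $\mathbf{I}$ acts on each $(p,q)$ piece separately.
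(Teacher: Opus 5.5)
Your proof is correct, but your main argument is not the paper's. The paper's proof is essentially the alternative you sketch at the end.

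\textbf{The paper's route.} It reduces to $\varphi,\psi$ of pure type $(p,q)$ and writes $\inner{L^j\varphi,L^j\psi}dV_\omega=L^j\varphi\wedge\ol{*L^j\psi}$. It then inserts the quoted formula for $*L^j\psi$ on primitive forms (Proposition 1.2.31 in Huybrechts). That formula supplies the factor $j!/(n-k-j)!$ and the phase $(-\ii)^{k(k+1)+p-q}=\ii^{k(k-1)}\ii^{p-q}$. The paper then recognizes the result as $\ii^{p-q}Q(\varphi,\ol\psi)$, and the Hodge--Riemann relation supplies the remaining factor $(n-k)!$.

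\textbf{Your route.} You use only the pointwise adjointness of $L$ and $\Lambda$ together with the commutation relation $[L,\Lambda]=(k-n)\,\mathrm{Id}$ on $k$-forms. Your induction giving $\Lambda L^i\varphi=i(n-k-i+1)L^{i-1}\varphi$ for primitive $\varphi$ is correct, and so is the telescoping product $\prod_{i=1}^j i(n-k-i+1)=j!\,(n-k)!/(n-k-j)!$.

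\textbf{What each buys.} Yours is purely algebraic. It needs neither the splitting into bidegrees nor any bookkeeping of powers of $\ii$, and it applies verbatim to primitive forms of mixed type. It also makes the range $j\le n-k$ transparent, since the factor $n-k-i+1$ vanishes at $i=n-k+1$. The paper's route is only a few lines of substitution, because the $*$-formula and the Hodge--Riemann relation are already quoted in Section 2. By contrast, the commutation relation you need is not stated in the paper, though it is in the same source (Huybrechts, Prop.~1.2.24) and underlies the Lefschetz decomposition the paper does quote.

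\textbf{Normalization.} Your normalization worry is harmless. The paper defines $\Lambda$ as the pointwise adjoint of $L$ for the metric whose volume form $\omega^n/n!$ has unit length, which is exactly Huybrechts' setting where $[L,\Lambda]=(k-n)$ holds. Your check $\abs{\omega^n}^2=n!^2$ is consistent with this: any rescaling $[L,\Lambda]=c\,(k-n)$ would produce $c^n n!^2$ instead.
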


Note that, if $j\ge n-k+1$, then $L^j\varphi=L^j\psi=0$ by $(5)$ in Proposition~\ref{prop:Lefschetz_decomposition}.

\begin{proof}
	Since the bidegree decomposition is orthogonal with respect to $\inner{\cdot,\cdot}_\omega$, $\varphi$ and $\psi$ can be assumed to be of type $(p,q)$ where $p+q=k$. Note first that
	\begin{equation*}
		\inner{L^j\varphi,L^j\psi}dV_\omega
		=
		L^j\varphi\wedge\ol{*L^j\psi}
	\end{equation*}
	by the definition of $\ast$. Since $\psi$ is primitive, Proposition~\ref{prop:star of L} implies that
	\begin{equation*}
		*L^j\psi
		=
		\frac{j!}{(n-k-j)!}\ii^{k(k+1)+p-q}L^{n-k-j}\psi.
	\end{equation*}
	It follows from the Hodge-Riemann bilinear relation that
	\begin{align*}
		\inner{L^j\varphi,L^j\psi} dV_\omega
		&=
		\frac{j!}{(n-k-j)!}
        L^j\varphi\wedge\ol{\ii^{k(k+1)+p-q}L^{n-k-j}\psi}
		\\
		&=
		\frac{j!}{(n-k-j)!}
		(-\ii)^{k(k+1)+p-q}
		\omega^{n-k}\wedge\varphi
		\wedge\ol{\psi}
		\\
		&=
		\frac{j!}{(n-k-j)!}
		\ii^{k(k-1)}\cdot \ii^{p-q}
		\omega^{n-k}\wedge\varphi
		\wedge\ol{\psi}
		\\
		&=
		\frac{j!}{(n-k-j)!}
		\ii^{p-q}Q\paren{\varphi,\ol{\psi}}
		=
		\frac{j!(n-k)!}{(n-k-j)!}
		\inner{\varphi,\psi} dV_\omega.
	\end{align*}
	This completes the proof.
\end{proof}

\begin{lemma}
    \label{lem:pnorm_primitive_decomposition}
    Let $\varphi, \psi\in \Lambda^{k}X$ with $k<n$. If $\varphi=\sum_rL^r\varphi_r,$ $\psi=\sum_rL^r\psi_r,$  are the primitive decompositions of $\varphi$ and $\psi$, then we have the following identities:
    \begin{align*}
       \inner{\varphi,\psi}_{\omega}
        &=
        \sum_r\frac{r!(n-k+2r)!}{(n-k+r)!}\inner{\varphi_r,\psi_r}_{\omega}, 
        \\
         \inner{L^{n-k}\varphi,L^{n-k}\psi}_{\omega}
        &=
        \sum_r\frac{(n-k+r)!(n-k+2r)!}{r!}\inner{\varphi_r,\psi_r}_{\omega},~\text{and}
        \\
        \inner{L^{n-k-1}\varphi,L^{n-k-1}\psi}_{\omega}
        &=
        \sum_r\frac{(n-k-1+r)!(n-k+2r)!}{(r+1)!}\inner{\varphi_r,\psi_r}_{\omega}.
    \end{align*}
\end{lemma}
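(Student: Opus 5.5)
Write $\varphi=\sum_r L^r\varphi_r$ and $\psi=\sum_s L^s\psi_s$ with $\varphi_r,\psi_r\in P^{k-2r}X$. The plan is to reduce all three identities to Proposition~\ref{prop:inner_product_Lefschetz}. The first step is to show that only the diagonal terms survive after applying $L^m$ for $m\in\{0,\,n-k-1,\,n-k\}$.

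Orthogonality of the Lefschetz decomposition (Proposition~\ref{prop:Lefschetz_decomposition}(1)) is stated in degree $k$. After applying $L^m$ we are in degree $k+2m$, and the term $L^{m+r}\varphi_r$ is exactly the component of $L^m\varphi$ in the summand $L^{m+r}(P^{k-2r})$ of the degree-$(k+2m)$ decomposition. The same holds for $\psi$. Hence, for $r\neq s$, the cross terms $\inner{L^{m+r}\varphi_r,L^{m+s}\psi_s}$ vanish, because they lie in different orthogonal summands (or one of them is zero).

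To be safe, I would instead verify this directly with the adjoint $\Lambda$, using the standard relation $[\Lambda,L^j]u=j(n-\deg u-j+1)L^{j-1}u$ for primitive $u$. Suppose $r<s$. Moving $L^{m+r}$ across gives $\inner{\Lambda^{m+r}L^{m+s}\psi_s,\varphi_r}$, and $\Lambda^{m+r}L^{m+s}\psi_s$ is a multiple of $L^{s-r}\psi_s$. Pairing this with the primitive $\varphi_r$ gives $\inner{L^{s-r-1}\psi_s,\Lambda\varphi_r}=0$.

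For the diagonal terms, apply Proposition~\ref{prop:inner_product_Lefschetz} with $j=m+r$ and primitive degree $k-2r$. This gives
\[
\inner{L^{m+r}\varphi_r,L^{m+r}\psi_r}=\frac{(m+r)!\,(n-k+2r)!}{(n-k+2r-m-r)!}\inner{\varphi_r,\psi_r}=\frac{(m+r)!\,(n-k+2r)!}{(n-k+r-m)!}\inner{\varphi_r,\psi_r}.
\]
The hypothesis of that proposition requires $m+r\le n-(k-2r)$, i.e.\ $m\le n-k+r$, which holds for $m\le n-k$. Now substitute each value of $m$:
\begin{itemize}
\item $m=0$ gives $\frac{r!(n-k+2r)!}{(n-k+r)!}$.
\item $m=n-k$ gives $\frac{(n-k+r)!(n-k+2r)!}{r!}$.
\item $m=n-k-1$ gives $\frac{(n-k-1+r)!(n-k+2r)!}{(r+1)!}$.
\end{itemize}
These match the three claimed formulas. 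The assumption $k<n$ guarantees $m=n-k-1\ge0$.

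The main obstacle is justifying the vanishing of the cross terms after applying $L^m$, since orthogonality in the target degree needs the decomposition there. I expect the $\Lambda$-commutator argument to settle this cleanly. The rest is bookkeeping of factorials.
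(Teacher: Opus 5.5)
Your proposal is correct and follows the paper's argument. You combine the orthogonality of the Lefschetz decomposition, applied in degree $k+2m$ because $L^m\varphi=\sum_r L^{m+r}\varphi_r$ is again a Lefschetz decomposition, with Proposition~\ref{prop:inner_product_Lefschetz} for $j=m+r$ on the primitive $(k-2r)$-forms. Your $\Lambda$-commutator check that the cross terms vanish is a valid extra verification that the paper leaves implicit.
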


\begin{proof}
    Since the primitive decomposition is orthogonal with respect to $\inner{\cdot,\cdot}_\omega$ and $\varphi_r, \psi_r$ are primitive $(k-2r)$-forms, Proposition~\ref{prop:inner_product_Lefschetz} implies that
    \begin{align*}
        \inner{\varphi,\psi}
        =
        \sum_r \inner{L^r\varphi_r,L^r\psi_r}
        =
        \sum_r\frac{r!(n-k+2r)!}{(n-k+r)!}\inner{\varphi_r,\psi_r}
    \end{align*}
    The other inequalities can be obtained using the similar reasoning as
    \[
    L^{n-j}\varphi=\sum_rL^{n-j+r}\varphi_r,~L^{n-j}\psi=\sum_rL^{n-j+r}\psi_r
    \]  are the primitive decompositions of $L^{n-j}\varphi$ and $L^{n-j}\psi$ for any $j\geq 0$.
\end{proof}

 \begin{proposition}
 	\label{prop:comparison_pq}
 	Let $\varphi,\psi$ be smooth $(p,q)$-forms with $k:=p+q<n$ and $p\le q$.
 	Then 
 	\begin{equation*}
 	  (n-k)!^2 \inner{\varphi,\psi}_{\omega}
 	\le
 	\inner{L^{n-k}\varphi,L^{n-k}\psi}_{\omega}
 	\le
  \frac{(n-q)!^2}{p!^2}\inner{\varphi,\psi}_{\omega},
 	\end{equation*}
 	and
 	\begin{equation*}
 		(n-k-1)! (n-k)!
 		\inner{\varphi,\psi}_{\omega}
 		\le
 	\inner{L^{n-k-1}\varphi,L^{n-k-1}\psi}_{\omega}
 		\le
 		\frac{(n-q-1)!(n-q)!}{p!(p+1)!}\inner{\varphi,\psi}_{\omega}.
 	\end{equation*}
 \end{proposition}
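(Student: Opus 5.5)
We derive Proposition~\ref{prop:comparison_pq} from Lemma~\ref{lem:pnorm_primitive_decomposition} by comparing coefficients term by term. Since the inequalities are stated as comparisons of hermitian forms, it suffices to prove them for $\psi=\varphi$, i.e.\ as inequalities between the quadratic forms $\abs{L^{n-k}\varphi}^2$, $\abs{L^{n-k-1}\varphi}^2$ and $\abs{\varphi}^2$. Write $\varphi=\sum_r L^r\varphi_r$ for the primitive decomposition, with $\varphi_r$ primitive of type $(p-r,q-r)$. Because $\varphi$ has type $(p,q)$ and $L$ raises bidegree by $(1,1)$, the component $\varphi_r$ can be nonzero only for $0\le r\le \min(p,q)=p$; this is the only place the hypothesis $p\le q$ enters.

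By Lemma~\ref{lem:pnorm_primitive_decomposition}, all three quantities are nonnegative combinations of $\abs{\varphi_r}^2$. Set
\begin{equation*}
a_r:=\frac{r!(n-k+2r)!}{(n-k+r)!},\quad b_r:=\frac{(n-k+r)!(n-k+2r)!}{r!},\quad c_r:=\frac{(n-k-1+r)!(n-k+2r)!}{(r+1)!}.
\end{equation*}
It is then enough to bound the ratios $b_r/a_r$ and $c_r/a_r$ above and below, uniformly for $0\le r\le p$. Computing them gives
\begin{equation*}
\frac{b_r}{a_r}=\paren{\frac{(n-k+r)!}{r!}}^2,\qquad \frac{c_r}{a_r}=\frac{(n-k-1+r)!\,(n-k+r)!}{r!\,(r+1)!}.
\end{equation*}

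The key step is monotonicity in $r$. First, $(n-k+r)!/r!=(r+1)(r+2)\cdots(r+n-k)$ is a product of $n-k$ factors, each increasing in $r$, so it is increasing in $r$. Second, $(n-k-1+r)!/(r+1)!=(r+2)\cdots(r+n-k-1)$ is likewise increasing in $r$ (and equals $1$ when $n-k=1$). Since both ratios are products of these increasing positive factors, each is increasing in $r$.

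The minimum is therefore attained at $r=0$, giving $(n-k)!^2$ and $(n-k-1)!(n-k)!$. The maximum is attained at $r=p$, and substituting $n-k+p=n-q$ gives $(n-q)!^2/p!^2$ and $(n-q-1)!(n-q)!/(p!(p+1)!)$. Summing $a_r\abs{\varphi_r}^2$ times these bounds over $r$ yields both chains of inequalities.

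The only delicate point is the factorial bookkeeping together with justifying the restriction $r\le p$ via bidegree. Neither is a genuine obstacle.
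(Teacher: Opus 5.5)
Your proposal is correct and follows the paper's proof. Both use the primitive decomposition with $\varphi_r=0$ for $r>p$, apply Lemma~\ref{lem:pnorm_primitive_decomposition}, and compare the coefficient ratios $(n-k+r)!^2/r!^2$ and $(n-k-1+r)!(n-k+r)!/(r!(r+1)!)$ at $r=0$ and $r=p$; your monotonicity argument and the reduction to $\psi=\varphi$ make explicit what the paper leaves implicit.

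One small slip: when $n-k=1$ the factor $(n-k-1+r)!/(r+1)!$ equals $1/(r+1)$, not $1$, so it is not increasing. In that case $c_r/a_r$ is identically $1$, so the conclusion still holds. Alternatively, regroup the ratio as $\frac{(n-k-1+r)!}{r!}\cdot\frac{(n-k+r)!}{(r+1)!}$, where each factor is a product of $n-k-1$ increasing terms.
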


\begin{proof}
   Let $\varphi=\sum_{r\ge0}L^r\varphi_r,~ \psi=\sum_{r\ge0}L^r\psi_r$
     be the primitive decompositions of $\varphi$ and $\psi$ so that $\varphi_r$, $\psi_r$ are primitive $(p-r,q-r)$-forms. So $\varphi_r=\psi_r=0$ if $r>p$ and Lemma \ref{lem:pnorm_primitive_decomposition} implies that
    \begin{align*}
        \inner{\varphi,\psi}
        =
        \sum_{0\le r\le p}
         \inner{L^r\varphi_r,L^r\psi_r}
        =
        \sum_{0\le r\le p}
        \frac{r!(n-k+2r)!}{(n-k+r)!} \inner{\varphi_r,\psi_r},
    \end{align*}
    and
    \begin{align*}
        \inner{L^{n-k}\varphi,L^{n-k}\psi}
        =&
	\sum_{0\le r\le p}
        \inner{L^{n-k+r}\varphi_r,L^{n-k+r}\psi_r}
        =
        \sum_{0\le r\le p}
        \frac{(n-k+r)!(n-k+2r)!}{r!}\inner{\varphi_r,\psi_r}\\
         =&\sum_{0\le r\le p}
        \frac{(n-k+r)!^2}{r!^2}
        \cdot
        \frac{r!(n-k+2r)!}{(n-k+r)!}
         \inner{\varphi_r,\psi_r}
    \end{align*}
Then we have
    \begin{equation*}
        (n-k)!^2 \inner{\varphi_r,\psi_r}
        \le
        \inner{L^{n-k}\varphi,L^{n-k}\psi}
        \le
        \frac{(n-k+p)!^2}{p!^2}\inner{\varphi,\psi}=  \frac{(n-q)!^2}{p!^2}\inner{\varphi,\psi}
    \end{equation*}
as desired. It also follows from Lemma \ref{lem:pnorm_primitive_decomposition} that
        \begin{align*}
       \inner{L^{n-k-1}\varphi,L^{n-k-1}\psi}
        =&
        \sum_{0\le r\le p}
        \inner{L^{n-k-1+r}\varphi,L^{n-k-1+r}\psi}
        \\
        =&        
        \sum_{0\le r\le p}
        \frac{(n-k-1+r)!(n-k+2r)!}{(r+1)!}\inner{\varphi_r,\psi_r}\\
        =&\sum_{0\le r\le p}
        \frac{(n-k-1+r)!(n-k+r)!}{r!(r+1)!}
        \cdot
        \frac{r!(n-k+2r)!}{(n-k+r)!}
        \inner{\varphi_r,\psi_r}.
            \end{align*}
Therefore, we conclude that
        \begin{align*}
        (n-k-1)! (n-k)!\inner{\varphi,\psi}
        \le
         \inner{L^{n-k-1}\varphi,L^{n-k-1}\psi}
        \le
        \frac{(n-q-1)!(n-q)!}{p!(p+1)!}\inner{\varphi,\psi}.
    \end{align*}
\end{proof}

\section{Proof of Main Theorems}
\label{sec:proof_of_main_theorems}

A differential form on a smooth manifold is $\emph{simple}$ if it can be expressed as a wedge product of differential 1-forms. 
To start with, we recall the following lemma.
        
    \begin{lemma}[See Section 1.7.5 in \cite{Federer-Book}]
        \label{lemma:norm of wedge product}
        For $\varphi\in\bigwedge^p X$ and $\psi\in\bigwedge^q X$, we have
        \begin{equation*}
            \abs{\varphi\we\psi}_\omega
            \le
            \paren{
                \begin{array}{c}
                    p+q \\ p    
                \end{array}
            }^{1/2}
            \abs{\varphi}_\omega
            \cdot
            \abs{\psi}_\omega.
        \end{equation*}
        If either $\varphi$ or $\psi$ is simple, then
        \begin{equation*}
            \abs{\varphi\we\psi}_\omega
            \le
            \abs{\varphi}_\omega
            \cdot
            \abs{\psi}_\omega.
        \end{equation*}
    \end{lemma}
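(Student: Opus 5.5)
This is a pointwise statement in exterior linear algebra, so I will fix a point $x\in X$ and choose an orthonormal coframe $e^1,\dots,e^{2n}$ of $T^*_xX$ for the real inner product induced by $\omega$. I will first prove the real case and then extend it to complex coefficients at the end. In this frame the wedge products $e^I$, with $I$ ranging over increasing multi-indices, form an orthonormal basis of $\bigwedge^\bullet$. I expand $\varphi=\sum_{|I|=p}a_Ie^I$ and $\psi=\sum_{|J|=q}b_Je^J$.

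\textbf{Simple case.} Suppose $\varphi=v_1\wedge\cdots\wedge v_p$ is simple and nonzero. I apply Gram–Schmidt to the $v_i$. This changes $\varphi$ only by a scalar factor, so I may take $\varphi=\abs{\varphi}\,e^1\wedge\cdots\wedge e^p$ for a suitably adapted orthonormal coframe. I split $\psi=\psi'+\psi''$, where $\psi'$ collects the terms $e^J$ with $J\cap\{1,\dots,p\}=\emptyset$ and $\psi''$ the remaining terms. Then $e^1\wedge\cdots\wedge e^p\wedge\psi''=0$. Since $J\mapsto\{1,\dots,p\}\cup J$ is injective on multi-indices disjoint from $\{1,\dots,p\}$, the map $\psi'\mapsto e^{1\cdots p}\wedge\psi'$ is an isometry up to sign. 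Hence $\abs{\varphi\wedge\psi}=\abs{\varphi}\abs{\psi'}\le\abs{\varphi}\abs{\psi}$. The case where $\psi$ is simple follows by symmetry, since $\varphi\wedge\psi=\pm\psi\wedge\varphi$.

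\textbf{General case.} I write $\varphi\wedge\psi=\sum_K c_Ke^K$ with
\[
c_K=\sum_{I\sqcup J=K}\pm a_Ib_J .
\]
For a fixed $K$ with $\abs{K}=p+q$, there are exactly $\binom{p+q}{p}$ splittings $K=I\sqcup J$. Cauchy–Schwarz therefore gives
\[
\abs{c_K}^2\le\binom{p+q}{p}\sum_{I\sqcup J=K}\abs{a_I}^2\abs{b_J}^2 .
\]
Summing over $K$, each pair $(I,J)$ appears at most once, so the total is at most $\binom{p+q}{p}\sum_{I,J}\abs{a_I}^2\abs{b_J}^2=\binom{p+q}{p}\abs{\varphi}^2\abs{\psi}^2$. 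Taking square roots gives the first inequality.

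\textbf{Complex coefficients.} The same Cauchy–Schwarz computation works verbatim for complex $a_I,b_J$, since the $e^I$ remain an orthonormal basis of the complexification for the induced hermitian product. In the simple case the Gram–Schmidt step should be carried out on complex $1$-forms with the hermitian inner product. I will check that this still produces a basis that is orthonormal and extends to an orthonormal basis of wedge products. I expect this to be the only delicate point, because for a simple form with complex factors the adapted frame is a hermitian frame rather than a real coframe. Since the wedge-product basis built from any unitary coframe is orthonormal for the induced product, the disjoint-support isometry argument carries over unchanged.
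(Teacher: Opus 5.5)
Your proof is correct. The paper gives no argument of its own and only cites Federer's Section 1.7.5; your orthonormal-basis expansion, with the Cauchy--Schwarz count of the $\binom{p+q}{p}$ splittings $K=I\sqcup J$ and an adapted frame $\varphi=\abs{\varphi}\,e^1\wedge\cdots\wedge e^p$ in the simple case, is the standard proof of that result. Your extension to complex coefficients is also sound, and it is something the paper tacitly needs, since it applies the lemma to complex $(p,q)$-forms. The point you flag as delicate is settled as follows: the hermitian product on $\bigwedge^k T^*_\CC X$ is the sesquilinear extension of the real one, so $\langle v_1\wedge\cdots\wedge v_k,w_1\wedge\cdots\wedge w_k\rangle=\det(\langle v_i,w_j\rangle)$ holds for complex covectors, and hence the wedge products $u^I$ built from any unitary coframe are orthonormal.
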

    
 The following lemma will be important in the proof of Theorem \ref{thm:main_theorem1}.
 \begin{lemma}\label{lemma:k from p,q}
 	If $(X,\omega)$ is a K\"{a}hler manifold, then
 	\begin{equation*}
		\lambda_0^k(X)= \textup{min}\{\lambda^{p,q}_0(X):p+q=k\}
 	\end{equation*}
 	for any integer $k\geq 0$.
	Here, 
 	\begin{equation*}
 	\lambda^{p,q}_0(X)=\textup{inf}\{\iinner{\Delta\varphi,\varphi}:\varphi\in \mathcal{D}^{p,q}(X),\,\norm{\varphi}=1\} .
 	\end{equation*}
 \end{lemma}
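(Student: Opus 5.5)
The plan is to prove the two inequalities $\lambda_0^k(X)\le\min_{p+q=k}\lambda_0^{p,q}(X)$ and $\lambda_0^k(X)\ge\min_{p+q=k}\lambda_0^{p,q}(X)$ separately, using only the type decomposition \eqref{eqn:pq_decomposition} and the fact, recorded after \eqref{eq:Kahler_identity}, that on a K\"ahler manifold $\Delta$ preserves bidegree. The latter holds because $\Delta=2\Delta_{\dbar}$ and $\Delta_{\dbar}$ maps $(p,q)$-forms to $(p,q)$-forms. Throughout we use the expression $\iinner{\Delta\varphi,\varphi}=\norm{d\varphi}^2+\norm{d^*\varphi}^2$ for compactly supported smooth forms, so that $\lambda_0^k(X)$ in Remark~\ref{rmk:bottom of the spectrum} and $\lambda_0^{p,q}(X)$ are infima of the same quadratic form, taken over $\mathcal D^k(X)$ and $\mathcal D^{p,q}(X)$ respectively.

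For the inequality $\le$, observe that $\mathcal D^{p,q}(X)\subset\mathcal D^k(X)$ whenever $p+q=k$. The infimum defining $\lambda_0^k(X)$ is therefore taken over a larger set than the one defining $\lambda_0^{p,q}(X)$, so $\lambda_0^k(X)\le\lambda_0^{p,q}(X)$ for each such pair $(p,q)$.

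For the inequality $\ge$, take $\varphi\in\mathcal D^k(X)$ with $\norm{\varphi}=1$ and decompose it as $\varphi=\sum_{p+q=k}\varphi^{p,q}$. Each component $\varphi^{p,q}$ is again smooth with compact support, since it is a pointwise projection of $\varphi$. The components are pointwise orthogonal, so $\sum_{p+q=k}\norm{\varphi^{p,q}}^2=1$. Since $\Delta\varphi^{p,q}$ is of type $(p,q)$, the cross terms $\iinner{\Delta\varphi^{p,q},\varphi^{p',q'}}$ vanish for $(p,q)\ne(p',q')$, and hence
\[
\iinner{\Delta\varphi,\varphi}=\sum_{p+q=k}\iinner{\Delta\varphi^{p,q},\varphi^{p,q}}\ge\sum_{p+q=k}\lambda_0^{p,q}(X)\norm{\varphi^{p,q}}^2\ge\min_{p+q=k}\lambda_0^{p,q}(X).
\]
Here the middle step applies the definition of $\lambda_0^{p,q}(X)$ to $\varphi^{p,q}/\norm{\varphi^{p,q}}$ whenever $\varphi^{p,q}\neq0$; components with $\varphi^{p,q}=0$ contribute nothing to either side. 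Taking the infimum over all such $\varphi$ gives $\lambda_0^k(X)\ge\min_{p+q=k}\lambda_0^{p,q}(X)$.

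The only point that needs care is that the cross terms really do vanish. I would justify this by citing the commutation of $\Delta$ with $\dbar$, $\dbar^*$ and the type projections, which follows from \eqref{eq:Kahler_identity}. The completeness of $X$ is not needed here, because every form involved has compact support.
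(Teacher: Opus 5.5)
Your proposal is correct and follows essentially the same route as the paper. Both arguments decompose $\varphi\in\mathcal D^k(X)$ by bidegree and use that $\Delta$ preserves type on a K\"ahler manifold to kill the cross terms, and both use the inclusion $\mathcal D^{p,q}(X)\subset\mathcal D^k(X)$ for the reverse inequality; your added remarks on zero components and on compact support of the components only make explicit what the paper leaves implicit.
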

 \begin{proof}
 	Let $\varphi\in \mathcal D^k(X)$ with $\norm{\varphi}$=1. Then by \eqref{eqn:pq_decomposition}, there exists a set  $\{\varphi^{p,q}\in \bigwedge\nolimits^{\!p,q}X:p+q=k\}$ of orthogonal forms such that 
 	\[
 	\varphi=\sum_{p+q=k}\varphi^{p,q},~\norm{\varphi}^2=\sum_{p+q=k}\norm{\varphi^{p,q}}^2=1.
 	\]
 	Since $(X,\omega)$ is K\"{a}hler, it follows from (\ref{eq:Kahler_identity}) that each $\Delta \varphi^{p,q}$ is a $(p,q)$-form. So
 	\begin{align*}
 		\iinner{\Delta\varphi,\varphi}=\sum_{p+q=k}\iinner{\Delta\varphi^{p,q},\varphi^{p,q}}
 		\geq \sum_{p+q=k}\lambda_0^{p,q}(X)\norm{\varphi^{p,q}}^2
 		\geq \textup{min}\{\lambda^{p,q}_0(X):p+q=k\}
 	\end{align*}
 	and this implies that $\lambda_0^k(X)\geq \textup{min}\{\lambda^{p,q}_0(X):p+q=k\}.$ Note also that, given a pair $(p,q)$ satisfying $p+q=k$, we have   
 	\[
 	\lambda^{p,q}_0(X)=\textup{inf}\{\iinner{\Delta\varphi^{p,q},\varphi^{p,q}}:\varphi^{p,q}\in \mathcal{D}^{p,q}(X)\norm{\varphi^{p,q}}=1\} \geq \lambda_0^k(X)
 	\]
 	as any $(p,q)$-form is a $k$-form.
 \end{proof}

Now, we present the proofs of our main results, Theorems~\ref{thm:main_theorem1} and \ref{thm:main_theorem2}. 
While the underlying arguments for both theorems are essentially the same, Theorem~\ref{thm:main_theorem2} represents a more specific case and is thus simpler to prove. 
For this reason, we will first prove Theorem~\ref{thm:main_theorem2} and then proceed to the proof for Theorem~\ref{thm:main_theorem1}.

By Theorem \ref{thm:Hodge_theory}~(1), it suffices to settle these theorems for smooth forms with compact support.

\begin{proof}[Proof of Theorem~\ref{thm:main_theorem2}]
Choose  $\varphi\in C_0^\infty(X)$ and let $\psi:=L^n\varphi=\varphi\omega^n\in \Omega^{2n}(X)$.  Then by  Proposition~\ref{prop:inner_product_Lefschetz}, we have
 $\norm{\psi}=n!\norm{\varphi}$ and
\begin{equation*}
    \iinner{\Delta\psi,\psi}
    =
    \iinner{\Delta L^n\varphi,L^n\varphi}
    =
    \iinner{L^n\Delta\varphi,L^n\varphi}
    =
    n!^2\iinner{\Delta\varphi,\varphi}.
\end{equation*}
Note that the Leibniz rule yields
\begin{equation}
\begin{aligned}
    \psi=\varphi\omega^n
    &=
    \varphi d\eta\wedge\omega^{n-1}
    =
    d\paren{\varphi\eta\wedge\omega^{n-1}}
    -
    d\varphi\wedge\eta\wedge\omega^{n-1}=d\theta-\psi',
\end{aligned}
\end{equation}
where $\theta := \varphi\eta\wedge\omega^{n-1}$ and $\psi':=d\varphi\wedge\eta\wedge\omega^{n-1}.$ Since any 1-form is primitive, it follows from Proposition~\ref{prop:inner_product_Lefschetz} and Lemma~\ref{lemma:norm of wedge product} that
\begin{align*}
	\norm{\theta}^2
	&=
	\norm{L^{n-1}(\varphi\eta)}^2
	=
	(n-1)!^2
	\norm{\varphi\eta}^2
	\le
	(n-1)!^2
	\norm{\eta}^2_{L^\infty}
	\norm{\varphi}^2,\\
\norm{\psi'}^2
&\le\norm{\eta}^2_{L^\infty}
\norm{L^{n-1}(d\varphi)}^2\leq (n-1)!^2\norm{\eta}^2_{L^\infty}\norm{d\varphi}^2\leq 
(n-1)!^2\norm{\eta}^2_{L^\infty}
\iinner{\Delta\varphi,\varphi}.
\end{align*}
Finally, the estimates above imply 
\begin{eqnarray*}
n!^2\norm{\varphi}^2
=
\norm{\psi}^2
&=&
\iinner{\psi,d\theta-\psi'}
\le
\abs{\iinner{\psi,d\theta}}
+
\abs{\iinner{\psi,\psi'}}
=
\abs{\iinner{d^*\psi,\theta}}
+
\abs{\iinner{\psi,\psi'}}
\\
&\le&
\iinner{\Delta\psi,\psi}^{1/2}\cdot
\norm{\theta}
+
\norm{\psi}\cdot \norm{\psi'}
\\
&\le&
(n-1)!\norm{\eta}_{L^\infty}
\paren{
\iinner{\Delta\psi,\psi}^{1/2}
\norm{\varphi}
+
\norm{\psi}\iinner{\Delta\varphi,\varphi}^{1/2}
}
\\
&=&
2(n-1)!n!\norm{\eta}_{L^\infty}
\norm{\varphi}\iinner{\Delta\varphi,\varphi}^{1/2}
\end{eqnarray*}
as desired.
\end{proof}

\begin{remark}
The bottom of the spectrum of the Laplace--Beltrami operator on bounded strongly pseudoconvex domains, equipped with K\"ahler metrics induced by defining functions, was computed by Li and Tran~\cite{Li_Tran2010}.
\end{remark}

\begin{proof}[Proof of Theorem~\ref{thm:main_theorem1}]
Choose  $\varphi\in \mathcal{D}^{p,q}(X)$ with $p\leq q,~k:=p+q<n$ and let $\psi:=L^{n-k}\varphi\in \Omega^{2n-k}(X) $. Then 
\[
	(n-k)!^2\norm{\varphi}^2
\le
\norm{\psi}^2
\le
\frac{(n-q)!^2}{p!^2}\norm{\varphi}^2,
\]
and
	\begin{equation*}
		(n-k)!^2\iinner{\Delta\varphi,\varphi}
		\le 
		\iinner{L^{n-k}\Delta \varphi,L^{n-k}\varphi}=	\iinner{\Delta\psi,\psi}
		\le
		\paren{\frac{(n-q)!}{p!}}^2
		\iinner{\Delta\varphi,\varphi}
	\end{equation*}
by Proposition \ref{prop:comparison_pq}. Note also that the Leibniz rule yields
	\begin{align*}
		\psi=\varphi\wedge\omega^{n-k}
		&=
		d\paren{\varphi\wedge\eta\wedge\omega^{n-k-1}}
		-
		d\varphi\wedge\eta\wedge\omega^{n-k-1}
		=d\theta-\psi',
	\end{align*}
	where $\theta:=\varphi\wedge\eta\wedge\omega^{n-k-1}$ and $\psi':=d\varphi\wedge\eta\wedge\omega^{n-k-1}$. Then
	Lemma~\ref{lemma:norm of wedge product} and Proposition~\ref{prop:comparison_pq} imply
	\begin{align*}
			\norm{\theta}^2
		&\le
		\norm{\eta}^2_{L^\infty}
		\norm{L^{n-k-1}\varphi}^2
		\le
		 \frac{(n-q-1)!(n-q)!}{p!(p+1)!}
		\norm{\eta}_{L^\infty}^2
		\norm{\varphi}^2\leq 	\norm{\eta}^2_{L^\infty}\frac{(n-q)!^2}{(p+1)!^2}	\norm{\varphi}^2,\\
		\norm{\psi'}^2
		&		\le
		\norm{\eta}^2_{L^\infty}
		\norm{L^{n-k-1}(d\varphi)}^2=	\norm{\eta}^2_{L^\infty}
		\paren{\norm{L^{n-k-1}(\partial\varphi)}^2+	
			\norm{L^{n-k-1}(\bar{\partial}\varphi)}^2}
		\\
		&\leq
		\norm{\eta}^2_{L^\infty}
		\paren{\frac{(n-q)!^2}{(p+1)!^2}\norm{\partial\varphi}^2+\frac{(n-q-1)!^2}{p!^2}\norm{\bar{\partial}\varphi}^2}\leq \norm{\eta}^2_{L^\infty}\frac{(n-q)!^2}{(p+1)!^2}\norm{d\varphi}^2
		\\
		&\le
		\norm{\eta}^2_{L^\infty}\frac{(n-q)!^2}{(p+1)!^2}\iinner{\Delta\varphi,\varphi}.
	\end{align*} 
	So we obtain
	\begin{eqnarray*}
		(n-k)!^2\norm{\varphi}^2&\le& \norm{\psi}^2
		=
		\iinner{\psi,d\theta-\psi'}	\leq \iinner{\Delta\psi,\psi}^{1/2}
		\norm{\theta}
		+
		\norm{\psi}\norm{\psi'}
		\\
		&\le& \norm{\eta}_{L^\infty} \frac{(n-q)!}{(p+1)!}\paren{
			\iinner{\Delta\psi,\psi}^{1/2}
			\norm{\varphi}
			+
			\norm{\psi}\iinner{\Delta\varphi,\varphi}^{1/2}
		}
		\\
		&\le&2\frac{(n-q)!^2}{(p+1)!^2}\cdot\norm{\eta}_{L^\infty}
		\norm{\varphi}\iinner{\Delta\varphi,\varphi}^{1/2}
	\end{eqnarray*}
	which is the desired estimate. 
	If $k<n$ and $p\geq q$, then we use the estimate above for $\ol{\varphi}$ to obtain the same estimate. If $k>n$, then $\ast \varphi \in \mathcal{D}^{n-p,n-q}(X) $ is a $(2n-k)$-form with $2n-k<n$ so that
	\[
	\iinner{\Delta\ast\varphi, \ast\varphi}=\iinner{\ast\Delta\varphi, \ast\varphi}=\iinner{\Delta\varphi, \varphi}\geq \frac{c_{n-p,n-q}}{\norm{\eta}^2_{L^\infty}}\norm{\ast\varphi}^2=\frac{c_{n-p,n-q}}{\norm{\eta}^2_{L^\infty}}\norm{\varphi}^2
	\] 
	and this settles the estimate for any form in $\mathcal{D}^{p,q}(X)$ with $p+q\neq n$.
	
	Given a real number $x$, we denote by $\rd{x}$ the largest integer less than or equal to $x$ and by $\ru{x}$ the smallest integer greater than or equal to $x$. 
	Let $p,q\geq 0$ be integers satisfying $p\leq q$, $p+q=k< n$, and set $\alpha:=q-p\geq 0$.
	Then
	\begin{equation*}
		k=p+q=2p+\alpha=2q-\alpha,
	\end{equation*}
	and 
	\[
	\ru{k/2}+1=p+1+\ru{\alpha/2}, ~n-\rd{k/2}=n-q-\rd{-\alpha/2}=n-q+\ru{\alpha/2}.
	\]
	Hence we obtain
	\begin{equation*}
	\frac{(\ru{k/2}+1)!}{(n-\rd{k/2})!}=\frac{(p+1+\ru{\alpha/2})!}{(n-q+\ru{\alpha/2})!}\leq \frac{(p+1)!}{(n-q)!}
  \end{equation*}
	since $n-q\geq 1+p$. 
  So it follows that
	\[
	\lambda^{p,q}_0(X)\geq   \frac{(n-k)!^4}{4}
	\cdot
	\frac{(\ru{k/2}+1)!^4}{(n-\rd{k/2})!^4}
	\cdot
	\frac{1}{\norm{\eta}_{L^\infty}^2}.
	\]
	Combining this estimate together with Lemma \ref{lemma:k from p,q} yields the desired estimate for $k<n$. If $k>n$, then use Lemma \ref{lemma:k from p,q} and the Hodge star operator to conclude that
	\[
	\lambda^k_0(X)=\lambda^{2n-k}_0(X)=\textup{min}\{\lambda^{p,q}_0(X):p+q=2n-k\}\geq \frac{c_{2n-k}}{	\norm{\eta}_{L^\infty}^2}.
	\]
	This completes the proof.
\end{proof}

\begin{remark}
\label{rmk:better_estimate}
The sharper estimates for $\theta$ and $\psi'$ in Theorem \ref{thm:main_theorem1} hold when $\varphi\wedge\eta$ and $d\varphi$ are primitive but they are not so in general for a $k$-form $\varphi$ with $k\geq 1$. If the forms are primitive, then one can proceed as above to obtain
	\begin{equation*}
		\norm{\theta}^2
		=
	\norm{L^{n-k-1}(\varphi\wedge\eta)}^2
		\le
		(n-k-1)!^2
		\norm{\eta\wedge\varphi}^2
		=
		(n-k-1)!^2
	\norm{\eta}_{L^\infty}^2
		\norm{\varphi}^2,
	\end{equation*}
	and
	\begin{equation*}
		\begin{aligned}
			\norm{\psi'}^2
			&=
			\norm{L^{n-k-1}(d\varphi\wedge\eta)}^2
			\le
			\norm{\eta}^2_{L^\infty}
			\norm{L^{n-k-1}(d\varphi)}^2
			=
			(n-k-1)!^2\norm{\eta}^2_{L^\infty}
			\iinner{\Delta\varphi,\varphi}
		\end{aligned}
	\end{equation*}
	so that
	\begin{equation*}
		\iinner{\Delta\varphi,\varphi}
		\ge
		\frac{(n-k)^2}{4\norm{\eta}^2_{L^\infty}}\norm{\varphi}^2.
	\end{equation*}
\end{remark}

\section{Middle dimension case}
\label{sec:middle_dimension}
Let $(X,\omega)$ be a K\"{a}hler hyperbolic manifold of complex dimension $n$  where $\omega=d\eta$ with $\norm{\eta}_{L^\infty}<+\infty$. 
If there exists a discrete subgroup $\Gamma$ of the isometry group of $X$ such that the quotient space $X/\Gamma$ is compact, then $\harm^n_{(2)}(X)\neq 0$ by Theorem 2.5 in \cite{Gromov1991}. 
This implies that $\lambda_0^n(X)=0$; however, in this section, we shall use the arguments in \cite{Gromov1991} to derive an explicit uniform lower bound for the spectrum of the Laplacian on the orthogonal complement of $\harm^n_{(2)}(X)$ in $\Ltwo{n}(X)$ for any K\"{a}hler hyperbolic manifold. We start with the following theorem.
\begin{theorem}\label{thm:orthogonal_complement_pq}
	Let $(p,q)$ be a pair of nonnegative integers such that $p+q=n$. If $\varphi\in \Ltwo{p,q}(X)\cap \textup{Dom}(\Delta)$ is orthogonal to $\harm^{p,q}_{(2)}(X)$, then
\[
\iinner{\Delta\varphi,\varphi}
\geq
\frac{\min(c_{p,q-1},c_{p,q+1})}{\norm{\eta}^2_{L^\infty}}
\norm{\varphi}^2.
\]
\end{theorem}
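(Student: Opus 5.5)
We work with $\dbar$ and its adjoint in place of $d$, using $\Delta=2\Delta_{\dbar}$ from \eqref{eq:Kahler_identity}, and split $\varphi$ according to the $L^2$-Hodge decomposition for $\dbar$ (Remark~\ref{rmk:Hodge_theory}). Since $\varphi\perp\harm^{p,q}_{(2)}(X)$, we have $\varphi=\varphi_1+\varphi_2$ with $\varphi_1\in\ol{\im\,\dbar}$ and $\varphi_2\in\ol{\im\,\dbar^*}$. Because $\Delta_{\dbar}$ commutes with $\dbar$ and $\dbar^*$, both pieces stay in $\dom(\Delta)$ when $\varphi$ does (after a density argument, or working spectrally). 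Moreover $\iinner{\Delta\varphi,\varphi}=\iinner{\Delta\varphi_1,\varphi_1}+\iinner{\Delta\varphi_2,\varphi_2}$, since $\Delta$ preserves each summand of the orthogonal decomposition. It therefore suffices to prove
\[
\iinner{\Delta\varphi_i,\varphi_i}\ge \frac{\min(c_{p,q-1},c_{p,q+1})}{\norm{\eta}^2_{L^\infty}}\norm{\varphi_i}^2,\qquad i=1,2.
\]

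For $\varphi_2\in\ol{\im\,\dbar^*}$ we have $\dbar^*\varphi_2=0$, so $\iinner{\Delta_{\dbar}\varphi_2,\varphi_2}=\norm{\dbar\varphi_2}^2$. The key step is to compare $\dbar$ on $\ol{\im\,\dbar^*}$ with the spectrum in bidegree $(p,q+1)$. Set $u:=\dbar\varphi_2\in\Ltwo{p,q+1}(X)$; it satisfies $\dbar u=0$. Since $p+q+1=n+1\neq n$, Theorem~\ref{thm:main_theorem1} gives
\[
\norm{\dbar^*u}^2=\iinner{\Delta_{\dbar}u,u}\ge \frac{c_{p,q+1}}{2\norm{\eta}^2_{L^\infty}}\norm{u}^2,
\]
with the factor $1/2$ coming from $\Delta=2\Delta_{\dbar}$. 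The vector $\varphi_2$ lies in the closed range of $\dbar^*$ and in $\ker\dbar^*$'s orthogonal complement, so we can use the standard spectral gap relation. On $\ol{\im\,\dbar^*}$ the operator $\dbar^*\dbar$ is unitarily equivalent, via polar decomposition, to $\dbar\dbar^*$ restricted to $\ol{\im\,\dbar}\subset\Ltwo{p,q+1}(X)$, so their spectra away from $0$ coincide. Since $\dbar\dbar^*=\Delta_{\dbar}$ on $\ker\dbar$, the lower bound above transfers, giving
\[
\norm{\dbar\varphi_2}^2\ge \frac{c_{p,q+1}}{2\norm{\eta}^2_{L^\infty}}\norm{\varphi_2}^2,
\]
and therefore $\iinner{\Delta\varphi_2,\varphi_2}\ge c_{p,q+1}\norm{\eta}^{-2}_{L^\infty}\norm{\varphi_2}^2$. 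A concrete way to do the transfer without spectral theory is to approximate $\varphi_2$ by $\dbar^*v$ and apply the bound to $v$ projected onto $\ol{\im\,\dbar}$.

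The piece $\varphi_1\in\ol{\im\,\dbar}$ is handled symmetrically, now using bidegree $(p,q-1)$. Here $\dbar\varphi_1=0$, so $\iinner{\Delta_{\dbar}\varphi_1,\varphi_1}=\norm{\dbar^*\varphi_1}^2$, and $\dbar^*\varphi_1\in\Ltwo{p,q-1}(X)$ has total degree $n-1\neq n$. Theorem~\ref{thm:main_theorem1} therefore yields the constant $c_{p,q-1}$. If $q=0$ then $\varphi_1=0$, and if $q=n$ then $\varphi_2=0$; the corresponding constant is vacuous in each case, and we interpret the minimum accordingly. Adding the two estimates gives the statement.

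The main obstacle is the rigorous transfer step: the lower bound for $\Delta$ on one bidegree must be converted into a lower bound for $\dbar^*\dbar$ on $\ol{\im\,\dbar^*}$, with domain issues handled correctly. I would do this via the closed range theorem. The bound on $(p,q+1)$ shows that $\dbar^*$ has closed range and satisfies $\norm{\dbar^* u}\ge C\norm{u}$ for $u\in\ker\dbar\cap\dom\dbar^*$ orthogonal to $\ker\dbar^*$. The closed range theorem then gives the same constant for $\dbar$ on $(\ker\dbar)^\perp$, which is exactly where $\varphi_2$ lives.
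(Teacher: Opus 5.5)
Your proposal is correct and is essentially the paper's proof. You split $\varphi\perp\harm^{p,q}_{(2)}(X)$ into its $\ol{\im\,\dbar}$ and $\ol{\im\,\dbar^*}$ components, control them by Theorem~\ref{thm:main_theorem1} in bidegrees $(p,q-1)$ and $(p,q+1)$, and handle the factor from $\Delta=2\Delta_{\dbar}$ correctly. The one difference is how the bound is transferred. You invoke the closed range theorem (equal constants for $\dbar$ and $\dbar^*$, i.e.\ polar decomposition). The paper instead first proves by hand that the ranges are closed, writes $\varphi=\dbar\alpha+\dbar^*\beta$ with $\dbar^*\alpha=0$ and $\dbar\beta=0$, and applies Theorem~\ref{thm:main_theorem1} to $\alpha,\beta$ together with Cauchy--Schwarz, using $\Delta_{\dbar}\alpha=\dbar^*\varphi$ and $\Delta_{\dbar}\beta=\dbar\varphi$; this is exactly the ``concrete transfer'' you sketch. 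One slip: your middle paragraph places $\varphi_2$ in the orthogonal complement of $\ker\dbar^*$, but it lies in $(\ker\dbar)^\perp$, as your final paragraph correctly says.
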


\begin{proof}
Given a pair of integers $(r,s)$ with $r+s\neq n$, we first show that the image of the operator $\bar{\partial}:\Ltwo{r,s}(X)\rightarrow \Ltwo{r,s+1}(X)$ is closed. 
Let $\{\bar{\partial} \varphi_m\}$ be a convergent sequence in $\Ltwo{r,s+1}(X)$ for some $\{\varphi_m\}\subset \Ltwo{r,s}(X)$. 
Since 
\begin{equation*}
	\Ltwo{r,s}(X)=\textup{Ker}(\bar{\partial})\oplus\ol{\textup{Im}(\bar{\del}^*)},
\end{equation*}
one may assume that $\bar{\partial}^{\ast}\varphi_m=0$ for each $m\geq 1$; see p.364 of \cite{Demailly-Book}. Then
\begin{align*}
\norm{\varphi_{\ell}-\varphi_m}^2
&\leq 
\frac{\norm{\eta}^2_{L^\infty}}{c_{r,s}}\iinner{\varphi_{\ell}-\varphi_m,\Delta(\varphi_{\ell}-\varphi_m)}
=
\frac{\norm{\eta}^2_{L^\infty}}{2c_{r,s}}\iinner{\varphi_{\ell}-\varphi_m,\Delta_{\bar{\partial}}(\varphi_{\ell}-\varphi_m)}\\
&\leq 
\frac{\norm{\eta}^2_{L^\infty}}{2c_{r,s}}\cdot \norm{\bar{\partial}\varphi_{\ell}-\bar{\partial}\varphi_{m}}^2
\end{align*}
for any $\ell,m\geq 1$ by Theorem \ref{thm:main_theorem1} and (\ref{eq:Kahler_identity}).
Therefore, $\{\varphi_{m}\}$ also converges in $\Ltwo{r,s}(X)$ and we conclude from the closedness of the graph of $\bar{\del}$ that $\lim_{m\to \infty}\bar{\partial}\varphi_m\in \im\,\bar{\del}$, which says that $\im\,\bar{\del}\subset \Ltwo{r,s+1}(X)$ is closed. 
Similary, one can show that $\im\,\bar{\del}^*\subset \Ltwo{r,s-1}(X)$ is also closed for $r+s\neq n$.
In particular, it follows from Remark~\ref{rmk:Hodge_theory} that for $p+q=n$ we have
\begin{equation}\label{eqn; complex Hodge decomp.}
	\Ltwo{p,q}(X)
	=
	\harm^{p,q}_{(2)}(X)\oplus{\im\,\bar{\del}}\oplus{\im\,\bar{\del}^*}.
\end{equation}

Let $\varphi$ be an element in $\Ltwo{p,q}(X)\cap \textup{Dom}(\Delta)$ orthogonal to $\harm^{p,q}_{(2)}(X)$. 
Then by (\ref{eqn; complex Hodge decomp.}), there exist $\alpha\in	\Ltwo{p,q-1}(X)$ and $\beta\in	\Ltwo{p,q+1}(X)$ such that $\varphi=\bar{\del}\alpha+\bar{\del}^*\beta$.  Using (\ref{eqn; complex Hodge decomp.}) for $(r,s) \in \{(p,q-1),(p,q+1)\}$, one may assume further that $\bar{\del}^*\alpha=0$ and $\bar{\del}\beta=0$. So 
\begin{equation*}
	\iinner{\varphi,\varphi}
	=
	\iinner{\bar{\del}\alpha,\bar{\del}\alpha}+\iinner{\bar{\del}^*\beta,\bar{\del}^*\beta}
	=
	\iinner{\Delta_{\bar{\del}}\alpha,\alpha}+\iinner{\Delta_{\bar{\del}}\beta,\beta},
\end{equation*}
where $\Delta_{\bar{\del}}\alpha=\bar{\del}^*\varphi$ and $\Delta_{\bar{\del}}\beta=\bar{\del}\varphi$. 
By Theorem  \ref{thm:main_theorem1} and the Cauchy-Schwarz inequality, we have
\[
\norm{\alpha}\leq \frac{2\norm{\eta}_{L^\infty}^2}{c_{p,q-1}}\norm{\Delta_{\bar{\del}}\alpha},~\norm{\beta}\leq \frac{2\norm{\eta}_{L^\infty}^2}{c_{p,q+1}}\norm{\Delta_{\bar{\del}}\beta}
\]
so that
\begin{align*}
	\iinner{\Delta_{\bar{\del}}\alpha,\alpha}&\le \norm{\Delta_{\bar{\del}}\alpha}\norm{\alpha}\leq 
	\frac{2\norm{\eta}^2_{L^\infty}}{c_{p,q-1}}
	\iinner{\Delta_{\bar{\del}}\alpha,\Delta_{\dbar}\alpha},\\
		\iinner{\Delta_{\bar{\del}}\beta,\beta}&\le  \norm{\Delta_{\bar{\del}}\beta}\norm{\beta}\le
	\frac{2\norm{\eta}^2_{L^\infty}}{c_{p,q+1}}
	\iinner{\Delta_{\bar{\del}}\beta,\Delta_{\bar{\del}}\beta}.
\end{align*}
Hence we conclude that
\begin{align*}
	\iinner{\varphi,\varphi}
	&=
	\iinner{\Delta_{\dbar}\alpha,\alpha}+\iinner{\Delta_{\dbar}\beta,\beta}
	\\
	&\le
	\frac{2\norm{\eta}^2_{L^\infty}}{\textup{min}(c_{p,q-1},c_{p,q+1})}
	\paren{\iinner{\Delta_{\bar{\del}}\alpha,\Delta_{\bar{\del}}\alpha}+\iinner{\Delta_{\bar{\del}}\beta,\Delta_{\bar{\del}}\beta}}
	\\
	&=
	\frac{2\norm{\eta}^2_{L^\infty}}{\textup{min}(c_{p,q-1},c_{p,q+1})}
	\paren{\iinner{\bar{\del}^*\varphi,\bar{\del}^*\varphi}+\iinner{\bar{\del}\varphi,\bar{\del}\varphi}}
	=
	\frac{\norm{\eta}^2_{L^\infty}}{\textup{min}(c_{p,q-1},c_{p,q+1})}
	\iinner{2\Delta_{\dbar}\varphi,\varphi}\\
	&=	\frac{\norm{\eta}^2_{L^\infty}}{\textup{min}(c_{p,q-1},c_{p,q+1})}
	\iinner{\Delta\varphi,\varphi}
\end{align*}
as desired.
\end{proof}

\begin{theorem}
	If $\varphi\in\dom(\Delta)\cap\Ltwo{n}(X)$ is orthogonal to $\harm^n_{(2)}(X)$, then
	\[
	\norm{\varphi}^2\leq 
	\frac{\norm{\eta}^2_{L^\infty}}{\textup{min}(c_{n-1},c_{n+1})}
	\iinner{\Delta\varphi,\varphi}.
	\]
\end{theorem}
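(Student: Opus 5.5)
Decompose by bidegree and apply Theorem~\ref{thm:orthogonal_complement_pq} to each piece. Write $\varphi=\sum_{p+q=n}\varphi^{p,q}$. Since $(X,\omega)$ is K\"ahler, $\Delta$ preserves bidegree by \eqref{eq:Kahler_identity}, so $\Delta\varphi^{p,q}=(\Delta\varphi)^{p,q}\in\Ltwo{p,q}(X)$. Hence each $\varphi^{p,q}\in\dom(\Delta)\cap\Ltwo{p,q}(X)$. The bidegree decomposition is pointwise orthogonal, which gives
\[
\norm{\varphi}^2=\sum_{p+q=n}\norm{\varphi^{p,q}}^2,\qquad \iinner{\Delta\varphi,\varphi}=\sum_{p+q=n}\iinner{\Delta\varphi^{p,q},\varphi^{p,q}}.
\]

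Next, each $\varphi^{p,q}$ is orthogonal to $\harm^{p,q}_{(2)}(X)$. By Remark~\ref{rmk:Hodge_theory}, $\harm^n_{(2)}(X)=\bigoplus_{p+q=n}\harm^{p,q}_{(2)}(X)$. So for $h\in\harm^{p,q}_{(2)}(X)\subset\harm^n_{(2)}(X)$ we have $0=\iinner{\varphi,h}=\iinner{\varphi^{p,q},h}$, using bidegree orthogonality once more. Theorem~\ref{thm:orthogonal_complement_pq} then gives
\[
\norm{\varphi^{p,q}}^2\le\frac{\norm{\eta}^2_{L^\infty}}{\min(c_{p,q-1},c_{p,q+1})}\iinner{\Delta\varphi^{p,q},\varphi^{p,q}}.
\]
Here $c_{p,q-1}$ or $c_{p,q+1}$ is to be dropped when $q=0$ or $q=n$, since the corresponding space is zero. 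This is the only technical point to check in Theorem~\ref{thm:orthogonal_complement_pq}, and it is harmless: the relevant $\alpha$ or $\beta$ is simply $0$.

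The step that needs care is comparing constants, namely showing $\min(c_{p,q-1},c_{p,q+1})\ge\min(c_{n-1},c_{n+1})$ for every $p+q=n$. By Lemma~\ref{lemma:k from p,q} and the proof of Theorem~\ref{thm:main_theorem1}, each $c_{p,q\pm1}$ with $p+q\pm1=n\pm1$ is at least $c_{n\pm1}$. Indeed, the final part of that proof establishes $c_{r,s}\ge c_{r+s}$ for $r+s<n$ through the factorial inequality $\frac{(\ru{k/2}+1)!}{(n-\rd{k/2})!}\le\frac{(p+1)!}{(n-q)!}$. The case $r+s>n$ follows by the Hodge-star symmetry $c_{r,s}=c_{n-r,n-s}$ and $c_k=c_{2n-k}$. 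I will verify this monotonicity directly, including the case $p>q$, which follows by conjugation. In fact $c_{n+1}=c_{n-1}$, so the minimum is just $c_{n-1}$.

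Summing the bidegree estimates over $p+q=n$ then yields
\[
\norm{\varphi}^2\le\frac{\norm{\eta}^2_{L^\infty}}{\min(c_{n-1},c_{n+1})}\iinner{\Delta\varphi,\varphi},
\]
which is the claim.
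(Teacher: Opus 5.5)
Your proof is correct and follows the paper's argument: split $\varphi$ by bidegree, apply Theorem~\ref{thm:orthogonal_complement_pq} to each $\varphi^{p,q}$, compare constants, and sum. You are more careful than the paper about $\varphi^{p,q}\in\dom(\Delta)$, about each piece being orthogonal to $\harm^{p,q}_{(2)}(X)$, and about the edge cases $q=0,n$.

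The constant comparison you defer is in fact an identity. For $r+s=n-1$ one has $n-s=r+1$, so $c_{r,s}=\tfrac14$ for every such pair, and likewise $c_{n-1}=c_{n+1}=\tfrac14$. So you need neither the general monotonicity $c_{r,s}\ge c_{r+s}$ nor the conjugation step. This is just as well, because for the literal formula with $r>s$ that monotonicity fails in other degrees, e.g.\ $n=4$, $(r,s)=(2,0)$, where $c_{2,0}=\tfrac1{64}<c_2=\tfrac4{81}$.
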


\begin{proof}
Note first that $\varphi=\sum_{p+q=n}\varphi^{p,q}$ where each $\varphi^{p,q}\in \Ltwo{p,q}(X)$ is orthogonal to $\mathcal{H}_2^{p,q}(X)$. Then by Theorem \ref{thm:orthogonal_complement_pq}, we have
\begin{align*}
\norm{\varphi}^2
	=
	\sum_{p+q=k}\norm{\varphi^{p,q}}^2
	&\leq 
	\sum_{p+q=k}\frac{\norm{\eta}^2_{L^\infty}}
	{\textup{min}(c_{p,q-1},c_{p,q+1})}\iinner{\Delta\varphi^{p,q},\varphi^{p,q}}
	\\
	&\leq
	\sum_{p+q=k}\frac{\norm{\eta}^2_{L^\infty}}
	{\textup{min}(c_{n-1},c_{n+1})}\iinner{\Delta\varphi^{p,q},\varphi^{p,q}}
	\\
	&=
	\frac{\norm{\eta}^2_{L^\infty}}{\textup{min}(c_{n-1},c_{n+1})}\iinner{\Delta\varphi,\varphi}
\end{align*}
as $c_{p,q-1}\geq c_{n-1}$ and $c_{p,q+1}\geq c_{n+1}$ for each $(p,q)$ with $p+q=n$.

\end{proof}

\section{Spectrum of bounded symmetric domains}
\label{sec:Spectrum_BSD}

\subsection{K\"ahler hyperbolicity length}
Let $\Omega$ be an irreducible bounded symmetric domain in $\CC^n$, especially a Cartan/Harish-Chandra embedding of the corresponding Hermitian symmetric space of noncompact type, and let $N_\Omega$ be its generic norm. The Bergman kernel $K_\Omega$ of $\Omega$ is  then of the form 
\begin{equation*}
K_\Omega(z,w) 
	= c N_\Omega(z,w)^{-c_\Omega}
\end{equation*}
for a normalizing constant $c$ by the Euclidean volume of $\Omega$ and the genus $c_\Omega$ of $\Omega$ which is a positive integer. 
The K\"ahler-hyperbolicity length $\sL_\Omega$ of $(\Omega,\omega)$ is defined by
\begin{equation*}
\sL_\Omega = \sqrt{rc_\Omega}
\end{equation*}
where $r$ is the rank of $\Omega$, the dimension of a maximal totally geodesic polydisc in $\Omega$. 
If a bounded symmetric domain $\Omega$ is not irreducible, then it is biholomorphic to a product
\begin{equation*}
\Omega\cong\Omega_1\times\cdots\times\Omega_s
\end{equation*}
where each $\Omega_i$ is an irreducible bounded symmetric domain. In this case, we define
\begin{equation*}
\sL_\Omega 
	= \paren{\sum_{j=1}^s \sL_{\Omega_j}^2}^{1/2} 
	=\paren{ \sum_{j=1}^s r_j c_{\Omega_j} }^{1/2}
\end{equation*}
where each $r_j$ and $c_{\Omega_j}$ are the rank and the genus of $\Omega_j$.

\begin{proof}[Proof of Theorem~\ref{thm:main_cor}]
Let $\omega$ be the complete K\"ahler-Einstein metric on $\Omega$ with Ricci curvature $-\sK$.
By the argument of Section 4.1 in~\cite{Choi_Lee_Seo2025Ar}, one can see that the maximal holomorphic sectional curvature of $\Omega$ coincides with $-2\sK/\sL_\Omega^2$; thus we have
\begin{equation*}
\frac{2\sK}{\sL_\Omega^2} \geq K
\end{equation*}
for any upper bound $-K<0$ of holomorphic sectional curvature of $(\Omega,\omega)$. In fact, this is given by the Gaussian curvature of a totally geodesic holomorphic disc of full rank $r = r_1 +\cdots+ r_s$ in a maximal polydisc of $\Omega$.
Moreover, Theorem 1.1 in~\cite{Choi_Lee_Seo2025Ar} implies that there exists a potential $\varphi$ of $\omega$ such that 
\begin{equation}
\label{eqn:constant_gradient_length}
	\abs{d^c\varphi}_\omega^2\equiv\frac{\sL_\Omega^2}{2\sK}.
\end{equation}
This means that $\eta=d^c\varphi$ satisfies $\norm{\eta}^2_{L^\infty}=\sL_\Omega^2/2\sK$.
Together with Theorem~\ref{thm:main_theorem2}, it follows that
\begin{equation*}
	\lambda_0(X)
	\ge
	\frac{n^2}{4\norm{\eta}^2_{L^\infty}}
	=
	\frac{n^2}{4}\cdot\frac{2\sK}{\sL_\Omega^2}
	\ge
        \frac{n^2}{4}\cdot K,
\end{equation*}
This completes the proof.
\end{proof}

We end this subsection with the following theorem concerning the $L^\infty$-norm of any global $1$-form $\eta$ with $d\eta$ being the Bergman metric $\omega$ on bounded symmetric domains.
Note that on bounded symmetric domains, the Bergman metric is the K\"ahler-Einstein metric whose Ricci curvture $-1$.

\begin{theorem}\label{thm:application'}
    Let $\eta$ be a global 1-form of a bounded symmetric domain.
    If $d\eta$ is the K\"ahler form of Bergman metric, then
    \begin{equation*}
        \norm{\eta}^2_{L^\infty}\ge \frac{\sL_\Omega^2}{2}.
    \end{equation*} 
\end{theorem}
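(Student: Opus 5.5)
The plan is to reduce the statement to a one-dimensional isoperimetric estimate on a suitable totally geodesic holomorphic disc, and then apply Stokes' theorem on large geodesic balls. The supremum norm of $\eta$ can only drop under restriction, so it suffices to find one complex curve on which any primitive of $\omega$ must already be large.

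First I fix the curve. Let $\omega$ be the Bergman metric, which is K\"ahler--Einstein with $\ric(\omega)=-\omega$, so $\sK=1$ in the notation of the proof of Theorem~\ref{thm:main_cor}. By the argument of Section 4.1 in~\cite{Choi_Lee_Seo2025Ar} quoted there, there is a totally geodesic holomorphic disc $\iota:\DD\hookrightarrow\Omega$ of full rank, namely the diagonal of a maximal polydisc. Its induced metric $\iota^*\omega$ has constant Gaussian curvature $-c$ with $c:=2/\sL_\Omega^2$, the maximal holomorphic sectional curvature. For a complex curve, the holomorphic sectional curvature in its tangent direction coincides with the Gaussian curvature of the induced metric, so $(\DD,\iota^*\omega)$ is a hyperbolic disc of curvature $-c$. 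Set $\alpha:=\iota^*\eta$. Then $d\alpha=\iota^*\omega$ is the area form of the disc. Since $\iota$ is an isometric immersion, $\abs{\alpha}\le\abs{\eta}\circ\iota$ pointwise, because restricting a covector to a subspace does not increase its norm. Hence $\sup_\DD\abs{\alpha}\le\norm{\eta}_{L^\infty}$.

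Next I apply Stokes on the disc. Let $B_R$ be the geodesic ball of radius $R$ about a point. Then
\begin{equation*}
\mathrm{Area}(B_R)=\int_{B_R}d\alpha=\int_{\partial B_R}\alpha\le \sup_\DD\abs{\alpha}\cdot\mathrm{Length}(\partial B_R).
\end{equation*}
In constant curvature $-c$ one has $\mathrm{Length}(\partial B_R)=\frac{2\pi}{\sqrt c}\sinh(\sqrt c R)$ and $\mathrm{Area}(B_R)=\frac{2\pi}{c}\paren{\cosh(\sqrt cR)-1}$. Their ratio tends to $1/\sqrt c$ as $R\to\infty$. Therefore $\norm{\eta}_{L^\infty}\ge\sup\abs{\alpha}\ge 1/\sqrt c$, which gives $\norm{\eta}_{L^\infty}^2\ge 1/c=\sL_\Omega^2/2$. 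For a reducible $\Omega$ the same argument works, using the full-rank diagonal disc across all factors as in the proof of Theorem~\ref{thm:main_cor}.

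The main obstacle is normalization bookkeeping rather than any analysis. I must check that the pointwise norm $\abs{\cdot}_\omega$ and the area form $\omega$ use the same Riemannian metric, i.e.\ that $\omega=\ii g_{\alpha\bar\beta}dz^\alpha\wedge d\bar z^\beta$ is the area form of the metric whose Gaussian curvature is $-2/\sL_\Omega^2$. A factor-of-$2$ mismatch between the conventions for $\omega$ and $g$ would shift the constant, so I would verify the normalization on the unit ball. There the potential of Theorem~1.1 in~\cite{Choi_Lee_Seo2025Ar} gives $\abs{d^c\varphi}^2=\sL_\Omega^2/2$, matching the bound above and showing that the estimate is sharp.
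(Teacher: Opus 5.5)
Your proof is correct. It shares the paper's reduction: restrict to the full-rank diagonal disc $\iota:\DD\to\Omega$, whose induced metric has curvature $-2/\sL_\Omega^2$, and use $\norm{\iota^*\eta}_{L^\infty}\le\norm{\eta}_{L^\infty}$. It differs in how you prove the one-dimensional estimate. The paper proves Lemma~\ref{lem:disc_case} by contradiction through its own spectral bound: a primitive with $\norm{\eta}^2_{L^\infty}<1/\sK$ would give, via Theorem~\ref{thm:main_theorem2} with $n=1$, $\lambda_0(\DD,\omega)>\sK/4$. That contradicts the classical value $\sK/4$ for curvature $-\sK$. You replace this with Stokes on geodesic balls and the limit $\mathrm{Area}(B_R)/\mathrm{Length}(\partial B_R)=\tanh(\sqrt c\,R/2)/\sqrt c\to 1/\sqrt c$, which is the linear isoperimetric (Cheeger) constant $\sqrt c$ of the hyperbolic plane. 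Your route is more elementary and self-contained, since it uses neither Theorem~\ref{thm:main_theorem2} nor the known bottom of the spectrum of $\mathbb{H}^2$, and it shows directly where the constant comes from. The paper's route instead presents the result as a by-product of the sharpness of Theorem~\ref{thm:main_theorem2}, which fits its spectral theme. The two give the same constant because on the hyperbolic plane $\lambda_0=h^2/4$, where $h=\sqrt c$ is the Cheeger constant. Your normalization concern resolves in your favour. With the paper's conventions ($dV_\omega=\omega^n/n!$), $\iota^*\omega$ is the area form of the induced metric, and $\abs{\cdot}_\omega$ on real $1$-forms is the dual Riemannian norm. For $n=1$ the metric with $\ric(\omega)=-2\omega$ has curvature $-2$, so your bound $1/2$ matches the value $\abs{d^c\varphi}^2_\omega=1/2$ quoted in the introduction.
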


This theorem is proved in~\cite{Choi_Lee_Seo2025Ar} under the additional assumption that $\eta$ is $d^c$-closed, which turns out to be superfluous here.

In fact, the problem can be reduced to the unit disc case.
The following lemma says that the $L^\infty$-norm of $\eta$ can be bounded below by the Gaussian curvature of the Poincar\'e metric.

\begin{lemma}\label{lem:disc_case}
Let $(\DD,\omega)$ be the unit disc equipped with the Poincar\'e metric with Gaussian curvature $-\sK$.
If $\eta$ is a global $1$-form with $d\eta=\omega$, then
\begin{equation*}
	\norm{\eta}^2_{L^\infty}\ge\frac{1}{\sK}.
\end{equation*} 
\end{lemma}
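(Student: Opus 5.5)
We use Stokes' theorem on geodesic discs and compare the area with the boundary length. Write $\omega$ for the Poincar\'e metric of curvature $-\sK$. Since all points of $\DD$ are equivalent under isometries, we may center a geodesic disc $B_R$ of radius $R$ at the origin. For constant curvature $-\sK$ the standard hyperbolic formulas give
\begin{equation*}
\mathrm{Area}(B_R)=\frac{2\pi}{\sK}\paren{\cosh(\sqrt{\sK}R)-1},
\qquad
\mathrm{Length}(\partial B_R)=\frac{2\pi}{\sqrt{\sK}}\sinh(\sqrt{\sK}R).
\end{equation*}

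The key step is Stokes' theorem. Since $d\eta=\omega$ and $\eta$ is smooth on the closed disc $\ol{B_R}$,
\begin{equation*}
\mathrm{Area}(B_R)=\int_{B_R}\omega=\int_{\partial B_R}\eta\le\int_{\partial B_R}\abs{\eta(\dot\gamma)}\,ds\le \norm{\eta}_{L^\infty}\,\mathrm{Length}(\partial B_R).
\end{equation*}
Here $\dot\gamma$ is the unit tangent to $\partial B_R$, and we used $\abs{\eta(v)}\le\abs{\eta}_\omega\abs{v}_\omega$.

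Combining the two displays yields
\begin{equation*}
\norm{\eta}_{L^\infty}\ge\frac{1}{\sqrt{\sK}}\cdot\frac{\cosh(\sqrt{\sK}R)-1}{\sinh(\sqrt{\sK}R)}.
\end{equation*}
As $R\to\infty$ the ratio tends to $1$. This gives $\norm{\eta}_{L^\infty}\ge 1/\sqrt{\sK}$, and squaring gives the claimed $\norm{\eta}^2_{L^\infty}\ge 1/\sK$. The same argument works for any isoperimetric-type inequality of the form area $\le$ const $\cdot$ length with asymptotically sharp constant $1/\sqrt{\sK}$ on large discs.

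The main thing to check is the normalization. The pointwise norm $\abs{\eta}_\omega$ must be the one induced by the Riemannian metric $g$ with $\omega=g(J\cdot,\cdot)$, so that $\omega$ integrates to Riemannian area and the dual norm controls $\eta$ on unit tangent vectors. With the conventions of Section~\ref{sec:Preliminaries} this holds, up to the factor convention for $\omega=\ii g_{1\bar1}dz\wedge d\bar z$. I would verify the constant on the model $\eta=d^c\log$-potential. On the disc this is sharp: the $d^c$ of the potential $-\frac{2}{\sK}\log(1-\abs{z}^2)$ scaled appropriately has constant length, matching \eqref{eqn:constant_gradient_length} with $\sL^2=1$ in that normalization, so the constants agree.
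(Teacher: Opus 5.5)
Your proof is correct, but it takes a different route from the paper's. The paper argues by contradiction through its own spectral estimate. If $\norm{\eta}^2_{L^\infty}<1/\sK$, then Theorem~\ref{thm:main_theorem2} with $n=1$ would give $\lambda_0(\DD,\omega)>\sK/4$. This contradicts the classical value $\lambda_0=\sK/4$ for curvature $-\sK$, i.e.\ $1/4$ after rescaling to curvature $-1$.

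You instead use the isoperimetric content directly. Stokes gives $\mathrm{Area}(B_R)\le\norm{\eta}_{L^\infty}\,\mathrm{Length}(\partial B_R)$, and the explicit ratio $\tanh(\sqrt{\sK}R/2)/\sqrt{\sK}\to 1/\sqrt{\sK}$ finishes the argument.

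Your version is elementary and self-contained. It needs no $L^2$-theory and no external input on $\lambda_0$; the upper bound $\lambda_0\le\sK/4$ is itself usually proved with test functions on large balls. It only uses that $\omega$ is the Riemannian area form and $\abs{\cdot}_\omega$ the dual norm, which is what $dV_\omega=\omega^n/n!$ encodes. It therefore also avoids passing between $(\DD,\omega)$ and $(\DD,\sK\omega)$.

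It is arguably the more transparent reason. Your Stokes step shows the Cheeger constant is at least $1/\norm{\eta}_{L^\infty}$, and Cheeger's inequality $\lambda_0\ge h^2/4$ then recovers Theorem~\ref{thm:main_theorem2} for $n=1$. So the paper's argument carries the same information, just routed through the spectrum. The paper's route is shorter once its main theorem is available, and it presents the lemma as a consequence of the sharpness of Theorem~\ref{thm:main_theorem2} on the disc.

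There is one slip in your closing sharpness aside, which the proof does not use. For the disc, $\sL_\DD^2=r\,c_\DD=1\cdot 2=2$, not $1$. With $\sL^2=1$, \eqref{eqn:constant_gradient_length} would give $\abs{d^c\varphi}^2_\omega=1/(2\sK)$, contradicting the lemma itself. With $\sL^2=2$ it gives exactly $1/\sK$, showing the bound is attained.
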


\begin{proof}
Suppose to the contrary that there exists a global 1-form $\eta$ on $\DD$ such that
    \begin{equation*}
        d\eta=\omega
        \quad\text{and}\quad
        \norm{\eta}^2_{L^\infty}=c<\frac{1}{\sK}.
    \end{equation*}
Note that $\sK\omega$ has the Gaussian curvature $-1$\ and $\sK\eta$ is a global $1$-form such that $d\paren{\sK\eta}=\sK\omega$.
It follows from Theorem~\ref{thm:main_theorem2} that
\begin{equation*}
	\lambda_0(\DD,\sK\omega)\ge\frac{1}{4c}>\frac{\sK}{4}.
\end{equation*}
On the other hand, it is well known that $\lambda_0(\DD,\sK\omega)=1/4$ (e.g. see~\cite{McKean1970,Chavel-Book}).
Hence we have
\begin{equation*}
	\frac{1}{4}
	=
	\lambda_0(\DD,\sK\omega)
	=
	\frac{1}{\sK}\lambda_0(\DD,\omega)
	>
	\frac{1}{\sK}\cdot\frac{\sK}{4}
	=\frac{1}{4},
\end{equation*}
which yields a contradiction.
\end{proof}

\begin{proof}[Proof of Theorem~\ref{thm:application'}]
    Again suppose to the contrary that there exists a $1$-form $\eta$ such that
    \begin{equation*}
        \norm\eta^2_{L^\infty}<\frac{\sL_\Omega^2}{2}.
    \end{equation*}
    First we consider the irreducible case.
    Let $\Omega$ be an irreducible bounded symmetric domain of rank $r$, which is embedded in $\CC^n$ via the Cartan/Harish-Chandra embedding.
    Then there exists a maximal polydisc of dimension $r$ which is embedded in $\Omega$ of the form 
    $$
    \DD^n=\set{(z^1,\dots,z^r,0,\dots,0)\in\Omega}.
    $$
    and its Bergman kernel $K_{\DD^r}$ satisfies
    \begin{equation*}
        K_\Omega(z,z)=d_1K_{\DD^r}(z,z)^{c_\Omega/2}
        \quad\text{on}\;\;\DD^r.
    \end{equation*}
    For the details, see~\cite{Choi_Lee_Seo2025Ar}. Since $\ric(\omega)=-\omega$, the complete K\"ahler-Einstein metric $\omega_{\DD^r}$ defined by the potential $(c_\Omega/2)\log K_{\DD^r}$ has Ricci curvature $2/c_\Omega$.
    Now we consider a totally geodesic disc $\iota:\DD\hookrightarrow\DD^r\subset\Omega$ of maximal rank defined by $z\rightarrow(z,\dots,z,0,\dots,0)$.
    Then it follows from Proposition 3.2 in \cite{Choi_Lee_Seo2025Ar} that the pull-back metric $\omega_\DD=\iota^*\omega_{\DD^r}$ has the constant Gaussian curvature 
    \begin{equation*}
        \kappa
        =-\frac{2/c_\Omega}{r}
        =-\frac{2}{rc_\Omega}
        =-\frac{2}{\sL^2_\Omega}.
    \end{equation*}
    Since $d(\iota^*\eta)=\omega_\DD$, Lemma~\ref{lem:disc_case} implies that 
    \begin{equation*}
        \norm{\eta\vert_{\DD}}^2_{L^\infty}
        \ge
        \frac{\sL^2_\Omega}{2}.
    \end{equation*}
    Now $\iota:(\DD,\omega_\DD)\to(\Omega,\omega)$ is the isometric embedding, so we have $\norm{\eta}^2_{L^\infty}\geq\norm{\iota^*\eta}^2_{L^\infty}$.
    This completes the proof of irreducible case.
    \bigskip
    
    In the case of a general bounded symmetric domain, the same argument applies. 
		Indeed, any symmetric bounded domain $\Omega$ is decomposed as 
		\begin{equation*}
		\Omega=\Omega_1\times\cdots\times\Omega_s
		\end{equation*}
		with  irreducible factors $\Omega_j$, $j=1,\ldots,s$, a maximal polydisc in $\Omega$ is given by
    \begin{equation*}
        \DD^r = \DD^{r_1}\times\cdots\times\DD^{r_s}
    \end{equation*}
    where each $\DD^{r_j}$ is a maximal polydisc of $\Omega_j$.
    Then we can apply the same arguemt to the totally geodesic disc in $\DD^r$ of maximal rank, which is given by the maximal totally geodesic disc in each $\DD^{r_j}$.
    This yields the same conclusion as before, which completes the proof.
    For the details, see Section 4.1.2 in~\cite{Choi_Lee_Seo2025Ar}.
\end{proof}

\subsection{Bottom of the spectrum on irreducible bounded symmetric domains}
\label{subsec:irreducible_domains}
Irreducible bounded symmetric domains consist of the following 
four classical type domains:
\begin{align*}
\Omega_{p,q}^{\mathrm{I}} &= \left\{ Z\in M^{\mathbb C}(p,q) : I_p -  ZZ^* >0 \right\} \;,\\
\Omega^{\mathrm{II}}_m &= \left\{ Z\in  M^{\mathbb C}(m,m) : I_m - ZZ^*>0, \,\, Z^t = -Z  \right\}\;,\\
\Omega_{m}^{\mathrm{III}} &= \left\{ Z\in M^{\mathbb C}(m,m) : I_m - ZZ^*  >0,\,\, Z^t = Z  \right\}\;,\\
\Omega_m^{\mathrm{IV}} &= \left\{ Z=(z_1,\ldots, z_m)\in {\mathbb C}^m : 
ZZ^* <1 ,\, 0< 1-2 ZZ^*+ \left| ZZ^t \right|^2   \right\}\;,
\end{align*}
and two exceptional type domains:
\begin{equation*} 
\Omega_{16}^{\mathrm{V}}\;, \quad \Omega_{27}^{\mathrm{VI}}\;.
\end{equation*}
Here $M^{\mathbb C}(p,q)$ denotes the set of $p\times q$ complex matrices and $Z^*$ the complex conjugate transpose of the matrix $Z\in M^{\mathbb C}(p,q)$.

Let $\Omega$ be an irreducible bounded symmetric domain in $\CC^n$. The Bergman kernel $K_\Omega$ is of the form 
\begin{equation*}
K_\Omega(z,z) = c N_\Omega(z,z)^{-c_\Omega}
\end{equation*}
for some positive constants $c$, the genus $c_\Omega$ and the generic norm $N_\Omega$. The constant $c_\Omega$, the dimension $n$ and the rank $r$ are given by as follows:

\setlength{\tabcolsep}{10pt}
\renewcommand{\arraystretch}{1.5}
\begin{table}[h]
\label{data for BSDs}
\caption{Invariants}
\begin{tabular}{ c|c|c|c|c|c|c}
$\Omega$& $\Omega_{p,q}^{\mathrm{I}}$&$\Omega_{m,m}^{\mathrm{II}}$& $\Omega^{\mathrm{III}}_{m,m}$&$\Omega_m^{\mathrm{IV}}$  &$\Omega_{16}^{\mathrm{V}}$& $\Omega_{27}^{\mathrm{VI}}$\\\hline
$c_\Omega$&$p+q$&$2(m-1)$&$m+1$&$m$&$12$&$18$\\\hline
$n$&$pq$&$\frac{m(m-1)}{2}$&$\frac{m(m+1)}{2}$&$m$&$16$&$27$\\\hline
$r$&$p$&$\left[ \frac{m}{2}\right]$&$m$&$2$&$2$&$3$
\end{tabular}
\end{table}

As we already know, the Bergman metric $\omega_\Omega=d\dc \log K_\Omega$ coincide the complete K\"ahler-Einstein metric with Ricci curvature $-1$.
Then \eqref{eqn:constant_gradient_length} implies that there exists a global potential $\varphi:\Omega\rightarrow\RR$ such that
\begin{equation*}
	\abs{d^c\varphi}_\omega^2=\frac{\sL_\Omega^2}{2}.
\end{equation*}
Hence Theorem~\ref{thm:main_theorem2} gives a lower bounds for all irreducible bounded symmetric domains in terms of the K\"ahler hyperbolicity length.
And the lower bounds for four classical types are given as follows:
\begin{align*}
    &\lambda_0(\Omega^{\mathrm I}_{p,q})\ge\frac{(pq)^2}{2p(p+q)},
    \quad
    \lambda_0(\Omega^{\mathrm{II}}_{m,m})\ge\frac{m^2(m-1)}{16\bparen{m/2}},
    \\
    &\lambda_0(\Omega^{\mathrm{III}}_{m,m})\ge\frac{m(m+1)}{8},
    \quad\text{and}\quad
    \lambda_0(\Omega^{\mathrm{IV}}_m)\ge\frac{m}{4}.
\end{align*}
The lower bounds for two exceptional types are as follows:
\begin{equation*}
    \lambda_0(\Omega^{\mathrm{V}}_{16})\ge\frac{16}{3},
    \quad\text{and}\quad
    \lambda_0(\Omega^{\mathrm{VI}}_{27})\ge\frac{27}{4}.
\end{equation*}
Sharp lower and upper bounds for $\lambda_0$ in case of $\Omega^{\mathrm{II}}_{m,m}$, $\Omega^{\mathrm{III}}_{m,m}$, and $\Omega^{\mathrm{IV}}_{m}$ were obtained in \cite{Long_Li2019,Long_Zhang_Lin_Shen2023}.
The lower bound which they obtained is the same as the above.
It is important to note that their normalization differs from the one used here.
For example, in \cite{Long_Li2019}, the following global potential of the metric is used.
\begin{equation*}
    u=\frac{1}{m+1}\log K(z,z),
\end{equation*}
which implies that $\ric(dd^c u)=-(m+1)dd^c u$. 
Also they used the Laplacian which is two times of $\Delta$.
This leads to the following question:
\begin{question}
Can one obtain a sharp upper bound for $\lambda_0$ on K\"ahler hyperbolic manifolds, in the sense that it coincides with the known lower bound in the case of bounded symmetric domains?
\end{question}



\end{document}